\documentclass[a4paper, 12pt]{amsart}
\usepackage{amssymb,amsmath,txfonts,mathrsfs,titletoc,appendix}
\usepackage{graphicx}
\usepackage{latexsym}
\usepackage[alphabetic]{amsrefs}
\usepackage{geometry}
\usepackage{fancyhdr}
\usepackage{xcolor}
\usepackage{comment}
\usepackage{subcaption}
\usepackage{pifont} 

\usepackage{exscale}        
\usepackage{relsize}       

\usepackage{tensor}   

\setcounter{secnumdepth}{3} 

\usepackage{float} 

\usepackage[new]{old-arrows}
\usepackage{longtable}
\usepackage{booktabs}
\usepackage{array}
\usepackage{multirow}
\usepackage{supertabular}
\usepackage{amssymb}
\usepackage{amsmath}
\usepackage{mathrsfs}
\usepackage{titletoc}
\usepackage{latexsym}
\usepackage[alphabetic]{amsrefs}

 \newtheorem{thm}{Theorem}[section]
 \newtheorem{cor}[thm]{Corollary}
   
  \newtheorem{cjt*}{Conjecture}
 \newtheorem{lem}[thm]{Lemma}
 \newtheorem{prop}[thm]{Proposition}
 \newtheorem{defn}[thm]{Definition}
 \newtheorem{rem}[thm]{Remark}
 \numberwithin{equation}{section}
\newtheorem{lem*}{Lemma}
\newtheorem{cor*}{Corollary}

\def\R{\Bbb{R}}
\def\C{\Bbb{C}}

\begin{document}

\title{Delaunay hypersurfaces in spheres}
\author{Yongsheng Zhang}
\address{Academy for Multidisciplinary Studies, Capital Normal University, Beijing 100048, P. R. China}
\email{yongsheng.chang@gmail.com}
\date{\today}

\keywords{Delaunay hypersurface, CMC, spiral product, flower type, embeddedness} 
\begin{abstract}
We study Delaunay hypersurfaces in $\mathbb S^n$ with $n\geq 3$ and add a missing (flower) type of the category.
Moreover, embedded Delaunay hypersurfaces of nonzero constant mean curvatures in $\mathbb S^n$ are found.
\end{abstract}
\maketitle
\section{Introduction}\label{S1} 
In 1841, Delaunay discoverd a wonderful way of constructing rotational hypersurfaces of constant mean curvature (CMC) in $\mathbb R^3$.
All rotational CMC hypersurfaces come from the rolling construction of roulettes of conics.

The Delaunay's rolling construction was successfully generalized to the case of CMC rotationally hypersurfaces in $\R^n$ with $n> 3$ in \cite{H-Y},
and later to the cases in hyperbolic space $\mathbb H^n$ and standard Euclidean sphere $\mathbb S^n$ respectively with $n\geq 3$ in \cite{H}.
                %
                            
                        %
                           %
                            %
                            %
                     \begin{figure}[h]
	\centering
	\begin{subfigure}[t]{0.3\textwidth}
		\centering
		\includegraphics[scale=0.54]{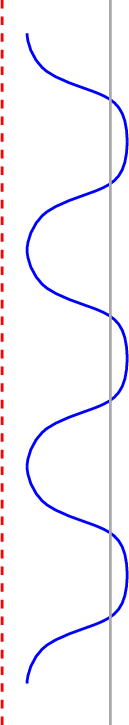}
                              \captionsetup{font={scriptsize}} 
                             \caption{Unduloid}
                               \label{fig:1a}
	\end{subfigure}
	\begin{subfigure}[t]{0.3\textwidth}
		\centering
	 \includegraphics[scale=0.54]{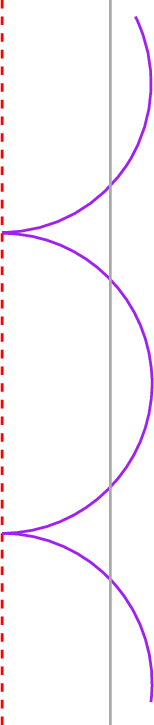}\,\,\,\,\,\,
                     \captionsetup{font={scriptsize}} 
                       \caption{Union of spheres}\label{fig:1b}
	\end{subfigure}
	\begin{subfigure}[t]{0.3\textwidth}
		\ \ \ \,
	 \includegraphics[scale=0.54]{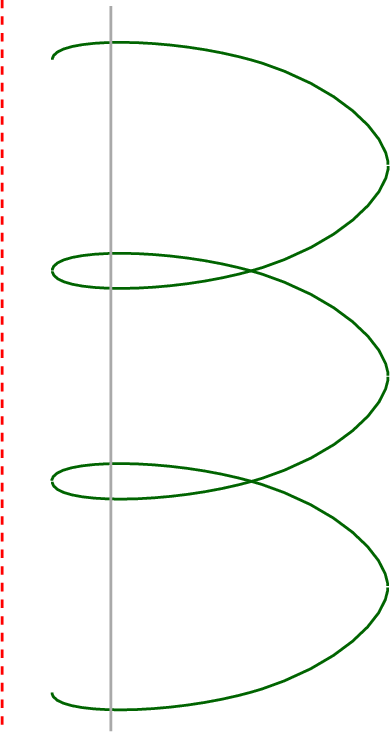}
                     \captionsetup{font={scriptsize}} 
                             \caption{Nodoid}\label{fig:1c}
	\end{subfigure}
	\caption{Delaunay $h$-CMC hypersurfaces with $h>0$} \label{fig:1}
\end{figure}
                               Let the dashed line stand for a geodesic and the gray line present the unique tubular hypersurface of the center geodesic of constant mean curvature $h>0$ 
                               with respected to the inner unit normal vector field of the solid tube.
                               Then, by increasing the largest distance from the center geodesic,
                               the transformation of Delaunay $h$-CMC hypersurfaces can be illustrated from left to right in Figure \ref{fig:1}.

                            A recent joint paper \cite{L-Z}
                            systematically studies
                            spiral minimal products.
                            A somehow degenerate but useful situation in \cite{L-Z} 
                            is the so-called singly spiral product
                            and we shall use this kind of product to rebuild spherical Delaunay hypersurfaces directly in this paper. 
                            More explicitly, 
                            taking $M_1=\{point\ (1,0)\}\in \mathbb S^1\subset \C^1$ and $M_2=\mathbb S^{n-2}$
                            with a singly spiral curve
                            \begin{equation}\label{n0}
                            \gamma(t)=\big(a(t)e^{it},\, b(t)\big)\in \mathbb S^2,
                            \  \text{ where  domain } I \text{ of } t \text{ is an open interval of}\ \R^1,
                            \end{equation}
                            we consider
\begin{eqnarray}
\nonumber
G_\gamma:\, I\times S^{n-2} &\longrightarrow& \mathbb S^n\subset \C^1\oplus \R^{n-1}\\
(t,\, x)\ \ \,\,\, &\longmapsto&\ \, \big(a(t)e^{it},\, b(t)x\big)\, .
\label{n1}
\end{eqnarray}
                            An advantage of this (local) construction of Delaunay $h$-CMC hypersurfaces
                            is to visualize things on $\mathbb S^2$.
                            
                            With the singly spiral product \eqref{n1}, we complete the category of Delaunay hypersurfaces in $\mathbb S^n$ by adding a missing type (see Theorem \ref{flower} and Corollary \ref{complete}).
                           
                            The structure of paper is organized as follows.
                            In \S \ref{S2} 
                            we recall basics about singly spiral products  from \cite{L-Z} and solve the $h$-CMC equation locally.
                            Local pieces of $h$-CMC hypersurfaces are described in \S \ref{S3},
                            while assembling to global $h$-CMC hypersurfaces 
                            as well as a missing type (named flower type) 
                            of negative constant mean curvature
                            are given in \S \ref{S4}.
                            In \S \ref{S5} 
                            we show that there exist infinitely many choices of negative  $h$
                            such that each the flower type  Delaunay hypersurface
                             induces an $h$-CMC immersed closed submanifold in the target sphere.
                             Moreover, we get the existence of uncountably many choices of positive $h$ each of which allows an embedded $h$-CMC Delaunay submanifold in the sphere.

                       {\ }
                       
                       \section{The $h$-CMC equation}\label{S2}
                       The $h$-CMC equation is said to the curve $\gamma$ in \eqref{n0} for $G_\gamma$ to be of constant mean curvature $h\in \R$,
                       and a (local) solution curve $\gamma$ is then called an $h$-curve.
                       For simplicity, $h$ in this paper always means the value of unnormalized mean curvature, i.e., the trace of second fundamental form.
                       
                      Note that for any $h\in \R$, there exists a combination $(a, b)\in \R^2_+$ 
                       such that $a\cdot\mathbb S^1\times b\cdot\mathbb S^{n-2}$ has constant mean curvature $h$ in $\mathbb S^n$.
                       Such combination is uniquely determined by
                     \begin{equation}\label{n00}
                       \frac{a}{b}=\dfrac{\ h+\sqrt{h^2+4(n-2)}\ }{2(n-2)}.
                       \end{equation}
                       See Theorem 3 (ii) of \cite{H} or \S 2 of \cite{L-Z}.
                       
                       From now on we focus on $C^2$ immersed $\gamma\subset \mathbb S^2$ with varying $(a(t),\, b(t))$ over some open interval  $I$.
                       According to \S 3 of \cite{L-Z},
                       the unit normal vector field we described in \S \ref{S1}
                       for the construction \eqref{n1}
                       can be gained by the normalization of 
                       \begin{equation}\label{te0} 
                        \tilde \eta_0(t,x)=
                          \Big(b(t)e^{it},\, -a(t)x\Big) 
                         -
                          \frac{\mathcal V}
                        {         %
                         \Theta
                          }
                         \Big( ia(t)e^{it},\, 0\Big) 
\end{equation}
where $\mathcal V=a'b-ab'$, $\Theta=(as'_1)^2$ and $\|G'_\gamma\|^2=\Theta+(a')^2+(b')^2$.
                 Let $\big\{v_1,\, \cdots, v_{n-2}\big\}$ be a local orthonormal basis around $x$ of $\mathbb S^{n-2}$.
                 Then $ \big\{ 
         \left(0,  v_1\right),\,\cdots,\, \left(0,  v_{n-2}\right), \, \tilde E
         \big\}$ 
         where $\tilde E
                    = 
                                            \frac{G'_\gamma}
                                            {
                          \left
                          \|
                          G'_\gamma
                          \right
                          \|
                            }
$ provides a local orthonormal basis around $G_\gamma(t, x)$.
By consulting Lemma 3.5 of \cite{L-Z} we know
 that     the second fundamental form of $G_\gamma$ in $\mathbb S^n$ is given by 
 $$A_{\tilde \eta_0}
                         =
                                \begin{pmatrix}
                                \frac{a}{b}I_{n-2}
                                                              & *\\
                                 * & \tilde \boxast
                                \end{pmatrix}
                                $$
                                where
                                           $$
     \left\|G'_\gamma\right\|^2 \cdot \tilde \boxast
                    =
                                   %
                                          \big[
                                          a''-a\cdot (t')^2
                                          \big]
                                          b
                                                                                 -
                                                 a
                                                       b''
                                                                                                  -
                                       \frac{\mathcal V}{\Theta}
                                           \big\{
                                              2a a' \cdot (t')^2
                                          \big \}.
$$
%
     Therefore,          
     an $h$-curve for generating an $h$-CMC hypersurface is characterized by      
                      $$
                        \|G'_\gamma\|^2
                                \cdot 
                        \|\tilde \eta_0\|
                                  \cdot
                         h
                         = \|G'_\gamma\|^2
                                 \cdot 
                                         \Big(\tilde \boxast+(n-2)\frac{a}{b}\Big).
                      $$
           
                       Of course one may try to solve the equation directly.
                       However, since our geometric concern is independent of choice of parametrization,
                       as explained in \cite{L-Z}, if we use arc parameter $s$ for curve $(a(t), b(t))$ in $\mathbb S^1$ with $a(t)=\cos s$ and $b(t)=\sin s$ from the very beginning,
                       namely $$\gamma(s)=\big(a(s)e^{is_1(s)},\, b(s)\big),$$
                       then  
                                  $\mathcal V=-1$,
                                        $\Theta=(a\dot s_1)^2$,
                       $ \|\dot G_\gamma\|^2=1+(a\dot s_1)^2 $,
                       and with 
                       $$  \tilde \eta_0(t,x)=
                          \Big(b(s)e^{is_1(s)},\, -a(s)\dot s_1(s)x\Big) 
                         +
                          \frac{1}
                        {         %
                         \Theta
                          }
                         \Big( ia(s)\dot s_1(s)e^{is_1(s)},\, 0\Big) 
                       $$
                       we have
                       $ \|\tilde \eta_0\|=\sqrt{\frac{1+\Theta}{\Theta}}$.
                       Now the $h$-CMC requirement  becomes
                       $$
                  \sqrt{\frac{1+\Theta}{\Theta}} h=(n-2)\frac{a}{b}
                       -\frac{ab(\dot s_1)^2}{1+\Theta}+\frac{-2ab(\dot s_1)^2+a^2\dot s_1\ddot s_1}{\Theta(1+\Theta)}
                       $$
                       which simplifies to the following $h$-CMC equation
                       \begin{equation}\label{e01}
        %
         -\frac{b}{a}+(n-2)\frac{a}{b}
                  -
       \sqrt{\frac{1+\Theta}{\Theta}}h
       =-
                \frac{\dot \Theta}{2\Theta  \left(
         1+\Theta
         \right)}\, .
             %
\end{equation}
As a result, we have
$$
\exp\left(2\int\frac{b}{a}-(n-2)\frac{a}{b}+\sqrt{\frac{1+\Theta}{\Theta}}h\, ds\right)=\frac{C_1 \Theta}{1+\Theta}\, 
$$
for some $C_1\in\R_+$ and hence, with $\Gamma=\frac{\Theta}{1+\Theta}$, it follows that
               $$
                         \exp\left(2\int{\frac{h}{\sqrt \Gamma}}\, ds\right)=\tilde C_1 \left(\cos^2 s \sin ^{2(n-2)}s \cdot\Gamma\right)
                              $$
                             for some $\tilde C_1\in \R_+$ and therefore
                                $$
                               \frac{2h}{\sqrt \Gamma} \exp\left(2\int{\frac{h}{\sqrt \Gamma}}\, ds\right)=\tilde C_1 \frac{d}{ds}\left(\cos^2 s \sin ^{2(n-2)}s \cdot\Gamma\right).
                                $$
                                So, we get
                                $$
                                 \frac{2h}{\sqrt \Gamma}=\frac{\frac{d}{ds}\left(\cos^2 s \sin ^{2(n-2)}s \cdot\Gamma\right)}{\cos^2 s \sin ^{2(n-2)}s \cdot\Gamma}
                                $$
                                and consequently,
                                $$
                               \left( \cos s \sin ^{n-2} s \right) h=\frac{d}{ds}\left(\sqrt{\cos^2 s \sin ^{2(n-2)}s \cdot\Gamma}\, \right).
                                $$
                                Finally, the $h$-CMC equation \eqref{e01} is solved 
                                by
                                $$
                                C+\frac{h}{n-1}\sin^{n-1}s= \sqrt{\cos^2 s \sin ^{2(n-2)}s \cdot\Gamma},
                                \text{\ \ \  where } C\in \R.
                                $$
                                Thus, 
      \begin{equation}\label{n3.5}
                                \Theta=
                                {\dfrac{\left(C+\frac{h}{n-1}\sin^{n-1}s\right)^2}{\cos^2s \sin^{2(n-2)}s -\left(C+\frac{h}{n-1}\sin^{n-1}s\right)^2}}
  \end{equation}
                                and
                 \begin{equation}\label{n4}
                                \dot s_1=
                                \pm
                                          \sqrt{
                                                   \dfrac{
                                                              \left(C+\frac{h}{n-1}\sin^{n-1}s\right)^2}
                                                              {
                                                              \cos^2s
                                                              \left[\cos^2s \sin^{2(n-2)}s -\left(C+\frac{h}{n-1}\sin^{n-1}s\right)^2                                                   
                                                              \right]
                                                              }
                                                }\, \ .
              \end{equation}
                           {\ }
                                                        
                       {\ }
                       
                       \section{Local construction of $h$-CMC Delaunay hypersurfaces}\label{S3}
                       
                       In this section, we  will build local pieces of $h$-curves in variable $s$.
                       As we shall see, 
                      sometimes $\Theta$ could touch zero inside $(0,\frac{\pi}{2})$ and,
                      due to the $\pm$ choices in \eqref{n4},  
                      there can exist the situation where by geometric meanings the expression $\dot s_1$ in  \eqref{n4} needs to take alternating signs on two sides of the (interior) zero of $\Theta$.
                      By default we shall draw pictures for the choice of non-negative $\dot s_1$.

                      Making sense  of \eqref{n3.5}  with non-negativity of the denominator  simply requires  that
                                $$
                                {\cos s \sin^{n-2}s >C+\frac{h}{n-1}\sin^{n-1}s}\, .
                                $$
                                Denote the left term of the inequality by $L(s)$ and the right by $R(s, C, h)$.
                                Clearly, 
                                for $n\geq 3$,
                                $L$ reaches its maximal at $s_0=\arctan \sqrt{n-2}\in(0,\frac{\pi}{2})$ and 
                      the derivatives of both sides with respect to $s$ are     
                                $$
                                \dot L=\big[-\tan s +(n-2)\cot s\big]\cos s\sin^{n-2}s
                                \text{\ \ \ and \ \ } 
                                \dot R=h\cos s\sin^{n-2}s.
                                $$
                                          \begin{figure}[ht]  
                                	\includegraphics[scale=1.1]{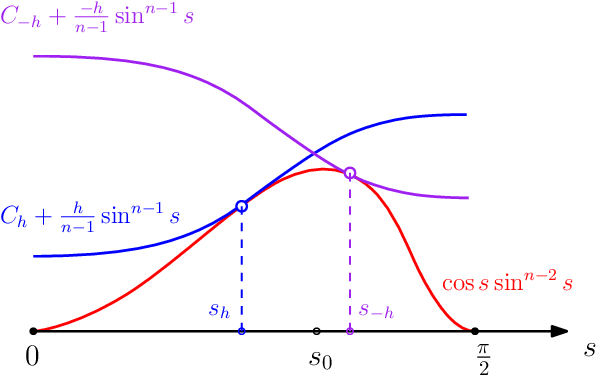}
                             \caption{Graphs of $R(s, C, \pm h)$ for $h>0$ and contact points}
                               \label{fig:2}
 \end{figure}
So,  when $h>0$ ($h=0$ or $h<0$),
             there exists a unique point $s_h\in (0, s_0)$ ($s_h=s_0$ or $s_h\in (s_0, \frac{\pi}{2})$) 
                                such that $\dot L(s_h)=\dot R(s_h, h)$, i.e.,
                                $$
                              -\tan s_h +(n-2)\cot s_h=h.
                                $$
                                This indicates that,
                                 no matter $h<0$, $h=0$ or $h>0$,
                                there exists a unique $C_h$ such that graphs of $L$ and $R$ with $C=C_h$ contact at only one point 
                                and exactly $\frac{a}{b}=\cot s_h$ fulfills \eqref{n00}.

               {\ }
               
                       Consequently, 
                       the largest possible domain $I\subset (0,\frac{\pi}{2})$ for an $h$-curve with  \eqref{n4} valid (i.e. $\Theta>0$)
                        is a connected interval depending of the choice of $C$ as in Figure \ref{fig:3}.
         \begin{figure}[ht]  
                                	\includegraphics[scale=1.1]{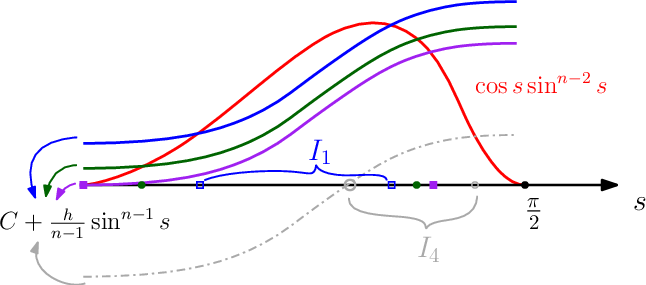}
                             \caption{Lagest domain for $\Theta>0$ in \eqref{n3.5} when $h>0$}
                               \label{fig:3}
 \end{figure}
  Since the behavior types will be seen differently, we state the following two propositions separately.
 
       \begin{prop}
       Assume that $h>0$.
       If  $C\in [0, C_h)$, then the largest possible open domain $I(C, h)$ is decided by two intersection points of closures of graphs of $R(\cdot\,, C, h)$ and $L(\cdot)$.
       When  $C\in (-\frac{h}{n-1},0)$, distinctly $I(C, h)$ is determined by  intersection points of the graph of $R(\cdot\, , C, h)$ with the $s$-axis and the graph of $L(\cdot)$ respectively.
       \end{prop}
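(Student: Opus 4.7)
The plan is to extract from the derivation of \eqref{n3.5} two conditions that together characterize the largest open domain, and then analyze them with a unimodality argument for $L-R$. Since the integration chain in \S\ref{S2} culminates in the identity
\[
\sqrt{\cos^2 s\,\sin^{2(n-2)}s\cdot \Gamma}=C+\tfrac{h}{n-1}\sin^{n-1}s,
\]
the non-negativity of the square root on the left forces $R(s,C,h)\geq 0$, while the positivity of the denominator in \eqref{n3.5} requires $R(s,C,h)<L(s)$. So the first step is to observe that
\[
I(C,h)=\bigl\{\,s\in(0,\tfrac{\pi}{2})\,:\,0<R(s,C,h)<L(s)\,\bigr\},
\]
whereupon the task reduces to locating the two endpoints of this connected open interval.

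Next I would record the derivative
\[
(L-R)'(s)=\bigl[-\tan s+(n-2)\cot s-h\bigr]\,\cos s\,\sin^{n-2}s,
\]
which follows directly from the formulas for $\dot L$ and $\dot R=hL$ given just before Figure \ref{fig:2}. The bracketed factor is strictly decreasing on $(0,\tfrac{\pi}{2})$ and vanishes precisely at $s_h$, so $L-R$ is strictly increasing on $(0,s_h)$ and strictly decreasing on $(s_h,\tfrac{\pi}{2})$, attaining its unique maximum $L(s_h)-C-\tfrac{h}{n-1}\sin^{n-1}s_h=C_h-C$ at $s_h$ by the very definition of $C_h$. Meanwhile $R$ itself is strictly increasing from $R(0)=C$ to $R(\tfrac{\pi}{2})=C+\tfrac{h}{n-1}$.

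For $C\in[0,C_h)$ the constraint $R\geq 0$ is automatic since $R\geq C\geq 0$, so only $R<L$ is active. The boundary values $(L-R)(0)=-C\leq 0$, $(L-R)(s_h)=C_h-C>0$, and $(L-R)(\tfrac{\pi}{2})<0$, combined with the unimodality above, yield via the intermediate value theorem exactly one zero $s_\ell\in[0,s_h)$ and one zero $s_r\in(s_h,\tfrac{\pi}{2})$ of $L-R$; hence $I(C,h)=(s_\ell,s_r)$. When $C=0$ the left zero reduces to $s=0$, a point at which only the \emph{closures} of the two graphs actually meet, explaining the wording of the proposition.

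For $C\in(-\tfrac{h}{n-1},0)$ the requirement $R\geq 0$ becomes binding. Strict monotonicity of $R$ together with $R(0)<0<R(\tfrac{\pi}{2})$ produces a unique zero $s^{*}\in(0,\tfrac{\pi}{2})$ of $R$, and the interval $(0,s^{*})$ is thereby excluded. On the remaining subinterval one has $(L-R)(s^{*})=L(s^{*})>0$ and $(L-R)(\tfrac{\pi}{2})<0$, so the unimodal structure of $L-R$ supplies a unique zero $s_{r}\in(s^{*},\tfrac{\pi}{2})$; hence $I(C,h)=(s^{*},s_{r})$, with the left endpoint given by the intersection of $R$ with the $s$-axis and the right endpoint by the intersection of $R$ with $L$, exactly as asserted. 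The main obstacle I anticipate is conceptual rather than technical: one has to justify carefully that the positive square-root convention used in deriving \eqref{n3.5} is precisely what forces $R\geq 0$, and hence produces the topologically different left endpoints in the two cases; once this point is acknowledged, the remaining work is elementary calculus together with the intermediate value theorem.
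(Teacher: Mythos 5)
Your argument is correct and is essentially the paper's own reasoning made rigorous: the paper offers no formal proof, treating the statement as evident from the computations of $\dot L$ and $\dot R$, the tangency point $(s_h, C_h)$, and Figures \ref{fig:2}--\ref{fig:3}, and your unimodality-of-$L-R$ plus intermediate value theorem argument is precisely that graphical comparison written out. The one detail you supply that the paper leaves implicit is the constraint $R\geq 0$ coming from the square-root identity (the paper's displayed inequality $L>R$ is stated only for that branch, and is otherwise loose when $C<0$), and your handling of it correctly explains why for $C\in(-\frac{h}{n-1},0)$ the left endpoint of $I(C,h)$ lies on the $s$-axis rather than at an intersection of the two graphs.
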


\begin{prop}\label{ppp2}
     Assume that $h>0$.
       If  $C\in [\frac{h}{n-1}, C_{-h})$, then the largest possible open domain $I(C, -h)$ is decided by two intersection points of 
       closures of graphs of $R(\cdot\,, C, -h)$ and $L(\cdot)$.
       When  $C\in (0,\frac{h}{n-1})$, distinctly $I(C,-h)$ is determined by  intersection points of the graph of $R(\cdot\,, C, -h)$ 
       with the graph of $L(\cdot)$ and the $s$-axis  respectively.
       \end{prop}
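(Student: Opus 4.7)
The plan is to parallel the analysis behind the preceding Proposition: track the graph of $R(\cdot, C, -h)$ relative to $L$ as $C$ varies, combining the strict monotonicity of $R$ in $s$ with the unimodality of $L - R$ and the tangency description of $C_{-h}$.

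Firstly, I would observe that $R(s, C, -h) = C - \frac{h}{n-1}\sin^{n-1} s$ is smooth and strictly decreasing on $(0, \pi/2)$, with $R(0) = C > 0$ and $R(\pi/2) = C - \frac{h}{n-1}$. Hence $R$ vanishes inside $(0, \pi/2)$ at a unique point $s^*$ precisely when $C < \frac{h}{n-1}$, that is, exactly in Case 2.

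Secondly, the identity $\dot L - \dot R(\cdot, C, -h) = \cos s \sin^{n-2} s \bigl(-\tan s + (n-2)\cot s + h\bigr)$, together with the strict monotonicity of $\tan s - (n-2)\cot s$ on $(0, \pi/2)$, yields a single critical point $s_{-h} \in (s_0, \pi/2)$ of $L - R$, determined by $-\tan s_{-h} + (n-2)\cot s_{-h} = -h$. Since $L - R(\cdot, C, -h)$ differs from $L - R(\cdot, C_{-h}, -h)$ only by the additive constant $C_{-h} - C$, and the latter attains its global maximum $0$ at $s_{-h}$ (by the tangency definition of $C_{-h}$ together with $R(0, C_{-h}, -h) > L(0) = 0$, forcing contact from above), $s_{-h}$ is the global maximum of $L - R$ for every admissible $C$, with value $C_{-h} - C$.

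With $(L-R)(0) = -C < 0$ and $(L-R)(\pi/2) = \frac{h}{n-1} - C$ in hand, the two cases diverge. In Case 1 ($C \in [\frac{h}{n-1}, C_{-h})$) both endpoint values are nonpositive while the interior maximum $C_{-h} - C$ is strictly positive; unimodality then forces exactly two zeros of $L - R$ in $[0, \pi/2]$ (the right one collapsing to the boundary point $\pi/2$ when $C = \frac{h}{n-1}$), and $I(C, -h)$ is the open interval between them, as claimed. In Case 2 ($C \in (0, \frac{h}{n-1})$) the right endpoint value becomes strictly positive, so unimodality gives a single zero $s_a \in (0, s_{-h})$ with $L(s_a) = R(s_a) > 0$, which in particular forces $s_a < s^*$. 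On $(s_a, s^*)$ one has $0 < R < L$ and therefore $\Theta > 0$ via \eqref{n3.5}, while at $s^*$ the numerator of \eqref{n3.5} vanishes so $\Theta$ touches zero; this identifies $(s_a, s^*)$ as the connected component of $\{\Theta > 0\}$ on which $R$ retains its initial positive sign, with endpoints an $L$-$R$ intersection and an $R$-axis intersection respectively.

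The main obstacle is to confirm that $s_{-h}$ is a global (rather than merely local) maximum of $L - R$; the tangency description of $C_{-h}$ combined with $R(0) > L(0)$ is precisely what pins this down by forcing $R \geq L$ everywhere at the critical value $C = C_{-h}$. After this, the analysis reduces to standard boundary bookkeeping and a single appeal to \eqref{n3.5} to distinguish the two connected pieces of $\{\Theta > 0\}$ in Case 2.
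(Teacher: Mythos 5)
Your proposal is correct and follows essentially the same route as the paper, which justifies this proposition purely by the graphical comparison of $L(s)=\cos s\sin^{n-2}s$ with the decreasing function $R(s,C,-h)=C-\frac{h}{n-1}\sin^{n-1}s$ in Figures \ref{fig:2} and \ref{fig:3}. Your unimodality argument for $L-R$ (unique critical point $s_{-h}\in(s_0,\frac{\pi}{2})$, tangency value $C_{-h}$, endpoint sign bookkeeping, and the observation $s_a<s^*$ isolating the component where $0<R<L$) is simply a rigorous rendering of that same picture-based reasoning, so no gap to report.
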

                      {\ }

                       \section{Global construction of $h$-CMC Delaunay hypersurfaces}\label{S4}
                       In this section we  shall exhibit how to assemble local pieces of $h$-curves for a global $h$-CMC Delaunay hypersurface
                       based on the understanding from \S \ref{S3}.
                       There are essentially three types of behaviors of $h$-curves for every non-vanishing value $h\in \R$.
                       For positive $h$, they are similarly as shown in Figure \ref{fig:1}.
                       
                       Whereas, for negative $h$, one missing limit type is found as the counterpart of union of spheres instead.
                       We name it  flower type since its generating $h$-curve crosses the peculiar point $p=(0,0,1)$ infinitely many times
                       and an aerial view (see Figure \ref{fig:12}) over $p$ consists of copies of petals.
                       It will be seen that,
                       up to a phase gauge in $s_1$ (always ignored in our consideration), 
                       the generating $h$-curve of flower type is unique. 
                       Tracing the curve,
                        each petal corresponds to a connected piece over $s_1$,
                       but ``adjacent" petals has jumping domains in $s_1$
                     and this forms a reason why the limit type has not been discovered for decades.
                     
                       The main idea of global constructions is to apply natural reflection/rotational constructions according to Figures \ref{fig:2} and \ref{fig:3}.
                        Let us explain details in below for $h$ of different signs separately.

                       \subsection{Case of positive $h$. }
                       As shown in Figure \ref{fig:3},
                       when $C\in (0, C_h)$,
                       there is a largest open domain
                       $$\left(z_L^h,\, z_R^h\right):=I(C, h)\Subset \left(0,\, \frac{\pi}{2}\right)$$
                       for $\Theta$ in \eqref{n3.5} to be positive.
                       
                       By $D$ we mean $\cos^2s \sin^{2(n-2)}s -(C+\frac{h}{n-1}\sin^{n-1}s)^2$,
                        the denominator of $\Theta$ in  \eqref{n3.5}.
                       Then, first of all, we would like to mention a simple observation.
                            \begin{prop}\label{pp4}
                            For $C\in (0, C_h)$,
                            the derivative of $D$ with respect to $s$ is nonzero at $z_L^h$ and $z_R^h$.
                            \end{prop}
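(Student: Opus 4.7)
The plan is to factor $D$ as a difference of squares and reduce the claim to a transversality statement for the two graphs bounding $I(C,h)$. Writing $L(s) = \cos s \sin^{n-2} s$ and $R(s) = C + \frac{h}{n-1}\sin^{n-1} s$, we have $D = L^2 - R^2$. On $(0,\frac{\pi}{2})$ both $L$ and $R$ are strictly positive (for $R$ this uses $C > 0$ and $h > 0$), so the endpoints $z_L^h$ and $z_R^h$ of $I(C,h)$ are precisely the two zeros of $L - R$ depicted in Figure~\ref{fig:3}.

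At any such endpoint $s^\ast$ we have $L(s^\ast) = R(s^\ast) > 0$, hence
$$\dot D(s^\ast) = 2L(s^\ast)\dot L(s^\ast) - 2R(s^\ast)\dot R(s^\ast) = 2L(s^\ast)\bigl(\dot L(s^\ast) - \dot R(s^\ast)\bigr),$$
so the claim $\dot D(s^\ast) \neq 0$ reduces to $\dot L(s^\ast) \neq \dot R(s^\ast)$, i.e.\ to transversal intersection of the graphs of $L$ and $R$ at $z_L^h$ and $z_R^h$. From the derivative formulas recorded just before Figure~\ref{fig:2},
$$\dot L(s) - \dot R(s) = \bigl(-\tan s + (n-2)\cot s - h\bigr)\cos s \sin^{n-2} s,$$
which on $(0,\frac{\pi}{2})$ vanishes only at the unique point $s_h$ satisfying $-\tan s_h + (n-2)\cot s_h = h$. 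So everything comes down to showing $s_h \notin \{z_L^h,\, z_R^h\}$.

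For this last step, recall that $C_h$ is the unique constant for which $L$ and $R(\cdot, C_h, h)$ are tangent, and that contact must occur at $s_h$ since $s_h$ is the only critical point of $L - R$ on $(0,\frac{\pi}{2})$. Under the strict inequality $C < C_h$, the graph $R(\cdot, C, h)$ sits uniformly below $R(\cdot, C_h, h)$ by the positive constant $C_h - C$, so $L(s_h) - R(s_h, C, h) = C_h - C > 0$, forcing $s_h$ to lie in the interior of $(z_L^h, z_R^h)$ rather than at its endpoints. The main conceptual point, and the only subtlety worth flagging, is exactly this role of the strict inequality $C < C_h$: it is what excludes the tangent case and guarantees that both boundary intersections are transverse, yielding $\dot D \neq 0$ at $z_L^h$ and $z_R^h$.
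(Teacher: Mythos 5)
Your proof is correct and follows essentially the same route as the paper, which simply declares the statement ``clear'' from Figure~\ref{fig:2}: the content of that figure is precisely the transversality of the graphs of $L$ and $R(\cdot,C,h)$ at the two intersection points when $C<C_h$, which you make rigorous via $D=(L-R)(L+R)$, the computation $\dot D(s^\ast)=2L(s^\ast)\bigl(\dot L(s^\ast)-\dot R(s^\ast)\bigr)$, and the observation that $\dot L-\dot R$ vanishes only at $s_h$, where $L(s_h)-R(s_h,C,h)=C_h-C>0$ excludes tangency at the endpoints.
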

                            \begin{proof}
                            This is clear according to Figure \ref{fig:2}.
                            \end{proof}
                       
                       This result ensures that the integral of $\dot s_1$ in \eqref{n4} over $(z_L^h,\, z_R^h)$ is finite
                       and hence one can reflect the curve at the ending points along $s_1$-direction to get the following figure.
                         \begin{figure}[ht]  
                                	\includegraphics[scale=0.8]{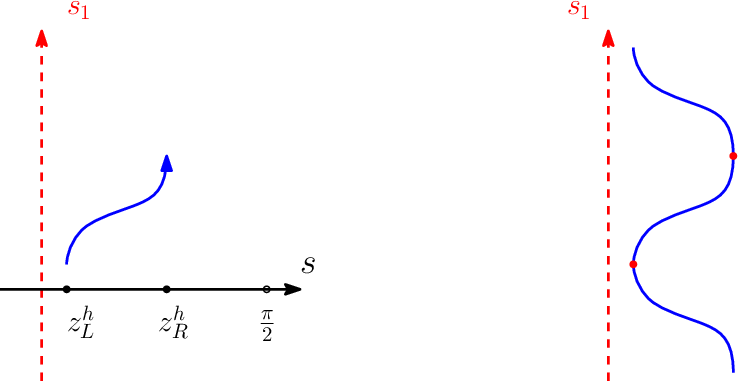}
                             \caption{Reflection extension when $h>0$ and $C\in (0, C_h)$ }
                               \label{fig:4}
 \end{figure}

                              Now consider the differentiability of the extension at joint points.
                              Obviously, it is $C^1$ at joint points,
                              so one can get a $C^0$ inner unit vector field.
                              In fact, the extended curve has much stronger differentiability for its geometric meanings.
                              
                               \begin{prop}\label{pp5}
                               For $h>0$ and $C\in (0, C_h)$,
                               the repeatedly extended curve as in Figure \ref{fig:4} 
                               gives an analytic $h$-curve 
                               and consequently a corresponding immersion of constant mean curvature $h$
                               from $\R\times  S^{n-2}$ into $\mathbb S^n$ is gained.
                               \end{prop}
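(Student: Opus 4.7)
My plan is to argue that the apparent blow-up of \eqref{n4} at the turning points $z_L^h, z_R^h$ is purely a coordinate artifact of the $s$-parametrization rather than a singularity of the geometric curve $\gamma\subset\mathbb S^2$. Reparametrizing $\gamma$ by its own arc length $\sigma$ on $\mathbb S^2$ will recast the $h$-CMC condition as an autonomous analytic second-order ODE on the open annulus $\{0<s<\pi/2\}$, for which the turning points are perfectly regular initial-value data. Analyticity of the glued curve then follows from the classical analytic ODE existence theorem, and a symmetry argument identifies the analytic continuation with the meridian reflection displayed in Figure \ref{fig:4}.

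Concretely, from $d\sigma/ds=\sqrt{1+\Theta}$ one has the unit-speed constraint $\dot s^2+\cos^2 s\,\dot s_1^2=1$ in $\sigma$, and repeating the derivation of \S\ref{S2} in this parameter recasts the $h$-CMC condition as an analytic ODE $\ddot\gamma=F(\gamma,\dot\gamma;h)$ on $\mathbb S^2$; the equation is geometric, so stripping the singular $s$-parametrization away leaves a perfectly regular analytic equation on the manifold. By Proposition \ref{pp4}, $D(s)=-\dot D(z_R^h)(z_R^h-s)+O((z_R^h-s)^2)$ with $\dot D(z_R^h)\neq 0$, so $\Theta\sim C_0/(z_R^h-s)$ and $(\sigma^*-\sigma)^2\sim (z_R^h-s)/C_0$ as $s\nearrow z_R^h$; equivalently, in arc-length, $s(\sigma)$ achieves an analytic non-degenerate minimum at $\sigma^*$, $\dot s_1(\sigma^*)=\pm 1/\cos z_R^h$ is finite, and $\gamma(\sigma^*)$ is a regular point of $\mathbb S^2$ with a well-defined unit tangent. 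Analytic ODE theory then provides an analytic extension through $\sigma^*$.

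To match this analytic continuation with the reflection in Figure \ref{fig:4}, let $R$ denote the reflection of $\mathbb S^2$ across the meridian $\{s_1=s_1^*\}$, an isometry that lifts to the isometry $(z,y)\mapsto(e^{2is_1^*}\bar z,y)$ of $\mathbb S^n\subset \C\oplus\R^{n-1}$. Then $\tilde\gamma(\sigma):=R\bigl(\gamma(2\sigma^*-\sigma)\bigr)$ also solves the $h$-CMC equation (as the equation is isometry-invariant), and at $\sigma^*$ one checks $\tilde\gamma(\sigma^*)=\gamma(\sigma^*)$ together with $\dot{\tilde\gamma}(\sigma^*)=-dR(\dot\gamma(\sigma^*))=\dot\gamma(\sigma^*)$, the last equality because $\dot\gamma(\sigma^*)$ is tangent to the latitude circle and hence reversed by $dR$. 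Uniqueness forces $\tilde\gamma=\gamma$, so the analytic extension is exactly the meridian reflection. Iterating at every consecutive turning point yields a globally analytic $h$-curve $\gamma\colon\mathbb R\to\mathbb S^2$ exactly as depicted. Since $s(\sigma)\in[z_L^h,z_R^h]\Subset(0,\pi/2)$ throughout, $b(\sigma)=\sin s(\sigma)$ is uniformly positive, so $G_\gamma(\sigma,x)=(\cos s(\sigma)\,e^{is_1(\sigma)},\sin s(\sigma)\,x)$ is an analytic immersion of $\mathbb R\times S^{n-2}$ into $\mathbb S^n$ of constant mean curvature $h$. The main technical obstacle is the first step, namely the careful recasting of \eqref{e01} as a regular analytic ODE in arc-length parameter; once that reformulation is pinned down, everything downstream is standard uniqueness and symmetry.
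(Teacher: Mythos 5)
Your argument is correct, but it follows a genuinely different route from the paper's. The paper glues the local piece with its $s_1$-reflection, notes the glued curve is $C^1$ at a joint point, upgrades to $C^2$ there using that both sides generate hypersurfaces of the same mean curvature $h$ and that the joint is an extremal point, and then invokes Morrey's regularity theory to conclude analyticity of the extended $h$-curve. You instead remove the coordinate degeneracy of \eqref{n4} at $z_L^h, z_R^h$ by passing to arc length on $\mathbb S^2$: the equivariant $h$-CMC condition becomes a prescribed geodesic-curvature equation for the profile curve in the orbit space, $\kappa_\gamma = h-(n-2)\langle \nabla \log \sin s,\, \eta\rangle$ with $\eta=J\dot\gamma$, which is an analytic second-order ODE wherever $\sin s>0$; the turning points are then ordinary points, classical analytic ODE theory continues the solution through them, and your reflection-plus-uniqueness step identifies the continuation with the meridian reflection of Figure \ref{fig:4} (note the sign bookkeeping: the orientation-reversing isometry and the parameter reversal each flip the curvature relative to $J\dot\gamma$, and only their composition preserves the equation -- your definition $\tilde\gamma(\sigma)=R(\gamma(2\sigma^*-\sigma))$ does include both). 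What your route buys: analyticity and the $C^2$ matching come for free from ODE theory, no elliptic regularity is needed, and the extension is shown to be forced by uniqueness rather than constructed by hand; what it costs is that the orbit-space reduction, which you correctly flag as the key step, is asserted rather than derived (it is standard, and Proposition \ref{pp4} supplies the transversality you invoke at the endpoints, while the confinement $z_L^h\le s\le z_R^h\Subset(0,\tfrac{\pi}{2})$ keeps the ODE regular and $b=\sin s>0$, giving the immersion of $\R\times S^{n-2}$). Minor slips that do not affect the argument: the asymptotic constant should be $(\sigma^*-\sigma)^2\sim 4C_0(z_R^h-s)$ rather than $(z_R^h-s)/C_0$, and $s(\sigma)$ has a nondegenerate maximum, not minimum, at the right-hand turning point.
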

                               \begin{proof}
                              As in both sides of a joint point
                              the local $h$-curve generates hypersurface of the same constant mean curvature $h$
                              and a joint point is an extremal point in Figure \ref{fig:4},
                              it follows that the extended curve is indeed $C^2$ at each joint point.
                              Furthermore, based on the $C^2$-differentiability,
                              Morrey's regularity theory \cite{M1, M2}
         asserts that the extended $h$-curve is analytic everywhere.
         \end{proof}
         

                       
                              For $C=0$,
                              it can be seen  from \eqref{n4} that
                              $\dot s_1$ vanishes at $s=0$
                              and we get the following.
       \begin{figure}[ht]  
                                	\includegraphics[scale=0.8]{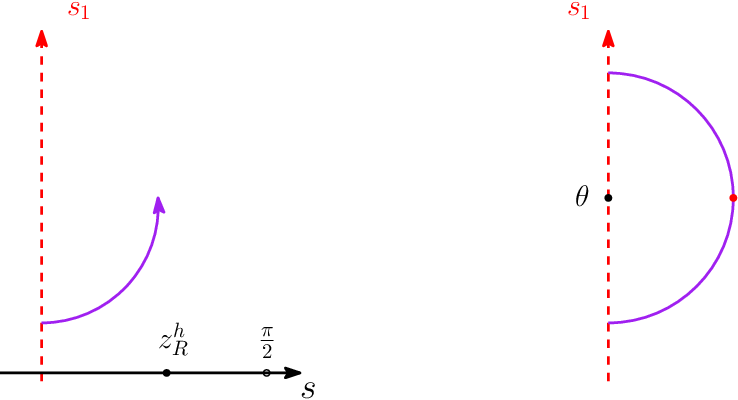}
                             \caption{Reflection extension for $C=0$ with $h>0$}
                               \label{fig:09}
 \end{figure}

                  \begin{prop}\label{pp6}
                  For $h\geq0$ and $C=0$,
                   via applying the reflection construction in the $s_1$-phase one time,
                               one gets an analytic $h$-curve as in Figure \ref{fig:09} 
                               which generates a hypersphere 
                               of size $\sin z^h_R$ with constant mean curvature $h$.
                               \end{prop}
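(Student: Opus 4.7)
The plan is to integrate \eqref{n4} explicitly at $C=0$, recognize the resulting $h$-curve as half of a round small circle on $\mathbb S^2$, and then observe that $G_\gamma$ maps it onto a geodesic $(n-1)$-sphere of the claimed size. Writing $k=h/(n-1)$, after cancelling $\sin^{n-2}s$ the equation \eqref{n4} at $C=0$ collapses to
\begin{equation*}
\dot s_1\;=\;\pm\,\frac{k\sin s}{\cos s\sqrt{\cos^2 s-k^2\sin^2 s}},
\end{equation*}
and the successive substitutions $u=\cos s$, $w=\sqrt{1+k^2}\,u$ reduce the antiderivative to an elementary arcsine. After absorbing the constant of integration into a phase shift of $s_1$ (permitted by the paper's convention), one obtains the conservation law
\begin{equation*}
\cos s\cdot\sin s_1(s)\;\equiv\;\frac{k}{\sqrt{1+k^2}}.
\end{equation*}

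This identity is the heart of the matter: it says that $\gamma(s)=(\cos s\cos s_1,\cos s\sin s_1,\sin s)\in\mathbb S^2$ lies in a fixed affine plane, so $\gamma$ is an arc of a round circle on $\mathbb S^2$. On $(0,z_R^h)$ the $h$-curve covers one quarter of that small circle, running from the equatorial tangency at $s=0$ (where $s_1=\arctan k$) up to the highest point $s=z_R^h$, at which $\dot s_1\to+\infty$ while $\dot s=0$. A single reflection in the $s_1$-phase about the line $s_1=\pi/2$ continues the curve back down to $s=0$ at $s_1=\pi-\arctan k$, so the original and reflected pieces together sweep out the entire upper half of the small circle. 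Analyticity of the extension is immediate from the closed form (reparametrizing gives $s=\arccos(k/(\sin s_1\sqrt{1+k^2}))$, analytic on $(0,\pi)$); alternatively it follows by the $C^2$-matching plus Morrey argument of Proposition \ref{pp5}.

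Feeding the conservation law into $G_\gamma(s,x)=(\cos s\,e^{is_1},\sin s\,x)$ makes the imaginary part of the first complex slot the constant $k/\sqrt{1+k^2}$ along the whole extended curve, so the image of $G_\gamma$ lies in the affine hyperplane $\{\Im z=k/\sqrt{1+k^2}\}\subset\C\oplus\R^{n-1}$. This hyperplane cuts $\mathbb S^n$ in a round $(n-1)$-sphere of Euclidean radius $1/\sqrt{1+k^2}=\sin z_R^h$ (using $\tan z_R^h=1/k$), and tracking $(\cos s\cos s_1,\sin s)$ along the completed half small circle shows that $G_\gamma$, coupled with the $\mathbb S^{n-2}$-fibre, surjects onto this entire hypersphere, with the two equatorial crossings collapsing their fibres to the two poles in the standard suspension fashion. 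The CMC value is $h$ either by construction or directly from $(n-1)\cot z_R^h=(n-1)k=h$. No step is genuinely hard: everything follows from the one-line integration that reveals the conserved quantity; the only real care is verifying that one reflection already suffices to cover the whole hypersphere, which is transparent from the parametrization. The limit $h=0$ specializes $\gamma$ to a great semicircle and the image to the equatorial great $(n-1)$-sphere of size $\sin(\pi/2)=1$.
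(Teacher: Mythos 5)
Your proposal is correct, but it takes a genuinely different route from the paper's proof. You integrate \eqref{n4} at $C=0$ in closed form and extract the first integral $\cos s\,\sin s_1\equiv k/\sqrt{1+k^2}$ with $k=h/(n-1)$ (which indeed reproduces the ODE: differentiating gives $\dot s_1=\tan s\,\tan s_1=k\sin s/\bigl(\cos s\sqrt{\cos^2 s-k^2\sin^2 s}\bigr)$), so the generating curve is an arc of the small circle $\{\Im z=k/\sqrt{1+k^2}\}\cap\mathbb S^2$ and the image of $G_\gamma$ is exactly the hyperplane section $\{\Im z=k/\sqrt{1+k^2}\}\cap\mathbb S^n$, a round $(n-1)$-sphere of radius $1/\sqrt{1+k^2}=\sin z^h_R$ (the vanishing of the denominator at $C=0$ gives $\tan z^h_R=(n-1)/h$); analyticity, the sufficiency of one reflection, and the value $h=(n-1)\cot z^h_R$ all drop out of this explicit parametrization. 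The paper instead argues by identification: the $C=0$ curve and its reflection evidently close up to an embedded hypersphere, no $C\neq 0$ curve can do so (Figure \ref{fig:3}), and since the round $h$-CMC sphere in $\mathbb S^n$ must be generated by some $h$-curve it must be this one, the radius then following from $(n-1)\cos z^h_R/\sin z^h_R=h$. Your computation buys a self-contained, constructive verification that the image really is round and covered exactly once after a single reflection, replacing the paper's appeal to the existence and uniqueness of the round CMC sphere, at the cost of an explicit integration the paper avoids. Two cosmetic slips, neither affecting the argument: at $s=0$ the curve meets the circle $\{s=0\}$ orthogonally (since $\dot s_1\to 0$ there), not tangentially as your phrase ``equatorial tangency'' suggests; and for $h=0$ the conservation law degenerates to $\sin s_1=0$, i.e.\ the two antipodal quarter-arcs forming the great semicircle you describe, which is best treated (as you and the paper both do) as a limiting case.
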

                     \begin{proof}
                     Evidently, the local $h$-curve and its reflection copy together induce an embedded hypersphere.
                     This is impossible for non-vanishing $C$, cf. Figure \ref{fig:3}.
                    As $h>0$,
                    there is certain (round) sphere in $\mathbb S^n$ of mean curvature $h$
                     which can be generated by an $h$-curve.
                     So, by Figure \ref{fig:3}, 
                     the $h$-curve must be the curve in Figure \ref{fig:09}.
                     This curve induces an $h$-CMC $(n-1)$-sphere 
                    centered at $\nu$ of radius $\sin z^h_R$ in 
                            the affine space $\R^n_\nu=
                             \nu^\perp 
                                                                        \subset \R^{n+1}_\nu
                                                                        $
                                                                        where
                                                                        $
                                                                        \nu$
                                                                        stands for the point
                                                                          $\left(
                                           \cos z^h_R
                                                       \cdot 
                                                               (e^{i\theta}, 0, \cdots, 0)
                                                               \right)\in \R^{n+1}
                                                               $.
                                                               
                                                               As the mean curvature the hypersphere in $\mathbb S^n$ is $(n-1)\frac{\cos z^h_R}{\sin z^h_R}$,
                                one has 
                                $z^h_R=\arctan\frac{n-1}{h}$.
                                The conclusion then also holds for $h=0$.
              %
                     \end{proof}

Next, let us focus on the situation of $C\in (-\frac{h}{n-1},0)$.
Deriving from  the curve corresponding to $I_4$ in Figure \ref{fig:3},
we flip its part below the $s$-axis in the $ss_1$-plane to the above to get the domain $J_4=\left(z_L^h,\, z_R^h\right)$ as in Figure \ref{fig:10}.
                                                \begin{figure}[ht]  
                                	\includegraphics[scale=1.1]{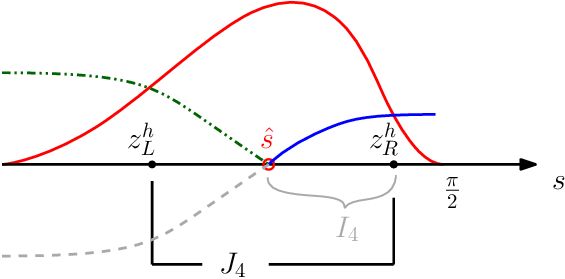}
                             \caption{interval $J$ for $C\in (-\frac{h}{n-1},0)$ with $h>0$}
                               \label{fig:10}
 \end{figure}
 The corresponding extension has one more step because of the occurrence of $\hat s$.
      \begin{figure}[ht]  
                                	\includegraphics[scale=0.7]{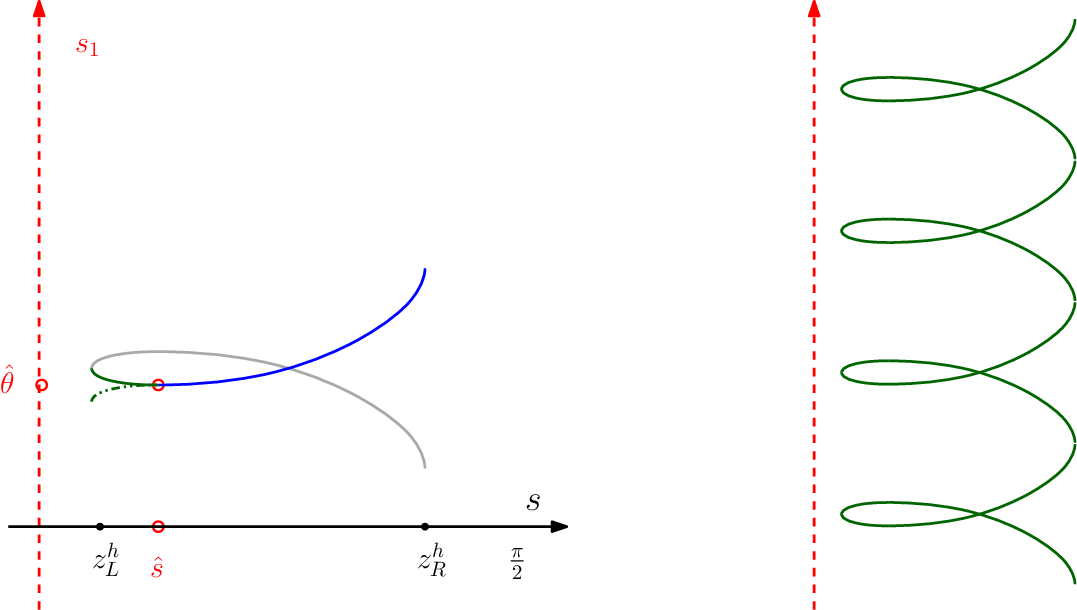}
                             \caption{Reflection extension when $h>0$ and $C\in (-\frac{h}{n-1},0)$ }
                               \label{fig:11}
 \end{figure}
 It can be seen that the dash dot dotted curve segment in Figure \ref{fig:11} stands for a hypersurface of constant mean curvature $-h$ 
 under the chosen unit normal vector field 
 and consequently its flipped mirror (solid curve in green defined over the $s$-range $(z^h_L,\, \hat s$) in Figure \ref{fig:11}) 
 with respect to $\{\hat \theta\}\times (0, \frac{\pi}{2}) $ corresponds to a hypersurface of constant mean curvature $h$. 
 Similarly as argued in Proposition \ref{pp5} we have the following.
                           \begin{prop}\label{pp7}
                           For $h>0$ and $C\in (-\frac{h}{n-1},0)$,
                   via mirror flips and  reflections along $s_1$-direction,
                               one gets an analytic $h$-curve as  in Figure \ref{fig:11} 
                               which generates a hypersurface
of constant mean curvature $h$, with whirls occurring in local model.
                               \end{prop}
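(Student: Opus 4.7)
The plan is to adapt the argument of Proposition \ref{pp5}, with the new feature that the interior zero $\hat s\in (z_L^h, z_R^h)$ of $R(s,C,h):=C+\frac{h}{n-1}\sin^{n-1} s$ forces $\dot s_1$ in \eqref{n4} to vanish at $\hat s$. A single-signed choice of the square root then yields two local pieces---one built with the $+$ branch on $(z_L^h,\hat s)$ and the other with the $-$ branch on $(\hat s,z_R^h)$---which must be joined across $\hat s$ by the mirror flip shown in Figure \ref{fig:11}.

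First I would check that the joined curve is analytic across $\hat s$. Writing $L(s)=\cos s\,\sin^{n-2}s$, the integrand under the square root in \eqref{n4} is $R^2/[\cos^2 s\,(L^2-R^2)]$, which is real analytic and nonnegative near $\hat s$. Since $R$ has a simple zero at $\hat s$ and $L^2-R^2>0$ there, a \emph{signed} choice of square root produces $\dot s_1=R/[\cos s\,\sqrt{L^2-R^2}]$, which is real analytic through $\hat s$ (and changes sign at $\hat s$, recovering the alternating $\pm$). Integrating yields an analytic $s_1(s)$, so the local curve $\gamma(s)=\big(\cos s\, e^{is_1(s)},\sin s\big)$ is analytic on the full interval $(z_L^h,z_R^h)$.

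Next I would reflect at the endpoints $z_L^h$ and $z_R^h$. By Proposition \ref{pp4}, $\dot D\neq 0$ at these two points, so $\dot s_1$ has an integrable square-root singularity there and $\gamma$ reaches a definite point of $\mathbb S^2$ with tangent orthogonal to the $s_1$-direction; reflecting in the $s_1$-phase extends $\gamma$ across each endpoint. Iterating the mirror flip and reflection as in Figure \ref{fig:11} produces a curve defined on all of $\R$. At every joint point the two adjacent $h$-curve segments come from the same $h$-CMC equation (the dash-dot-dotted segment of Figure \ref{fig:11} locally represents mean curvature $-h$ under its ambient normal, but after mirror flip with respect to $\{\hat\theta\}\times(0,\tfrac{\pi}{2})$ its mean curvature is restored to $+h$) and the joint is an extremum, so exactly as in Proposition \ref{pp5} the extension is $C^2$; Morrey's regularity theory \cite{M1, M2} then upgrades $C^2$ to real analytic.

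The main obstacle is not regularity itself but the careful bookkeeping of the unit-normal orientation across the flips: one must verify that the inner normal chosen on each segment matches at each joint so that the constant value of mean curvature is genuinely $+h$ throughout (the sign reversal from the normal flip and the orientation reversal from the spatial flip must cancel). Once this is checked, the induced map $\R\times S^{n-2}\to\mathbb S^n$ is an analytic immersion of constant mean curvature $h$; and because $\dot s_1$ changes sign at each preimage of $\hat s$, the curve locally advances and retreats in the $s_1$-phase before resuming, which is precisely the whirl behavior claimed in the statement.
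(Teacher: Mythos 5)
Your proposal is correct and follows essentially the same route as the paper: joining the two branches across the interior zero $\hat s$ via the signed expression for $\dot s_1$ (the paper's mirror-flip step, recorded in the remark following the proposition), reflecting in the $s_1$-phase at the endpoints where the derivative of $D$ is nonzero, and then invoking the Proposition \ref{pp5} argument ($C^2$ at joint points plus Morrey's regularity) to get an analytic $h$-curve, with the whirls coming from the sign change of $\dot s_1$ at $\hat s$. The only differences are cosmetic: you spell out the analyticity across $\hat s$ and flag the normal-orientation bookkeeping that the paper asserts via Figure \ref{fig:11}, and you cite Proposition \ref{pp4} (stated for $C\in(0,C_h)$) where its obvious analogue for $C\in(-\frac{h}{n-1},0)$ is what is actually needed.
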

\begin{rem}
Up to a sign, $\dot s_1$ is given by $ 
                                                   \dfrac{
                                                              C+\frac{h}{n-1}\sin^{n-1}s}
                                                              {
                                                              \cos s
                                                              \sqrt{
                                                              \cos^2s \sin^{2(n-2)}s -\left(C+\frac{h}{n-1}\sin^{n-1}s\right)^2                                                   
                                                              }
                                                }
$
over the entire $J$.
\end{rem}

                        \subsection{Case of vanishing $h$. }
                        This case falls exactly into the singly spiral minimal product of a point and $\mathbb S^{n-2}$ in $\mathbb S^{n}$
                        and
                       it has been systematically studied in  \cite{L-Z}.
                       Note that the rotational extension (of reflections in both slots) to construct global doubly spiral minimal products 
                       now reduces to the aforementioned (phase-reflection) extension in a single slot of \eqref{n1}.
                       In particular,  for a singly spiral minimal product the parameter $\tilde C$ in (3.25) of \cite{L-Z} is precisely $1/{C^2}$ for our $C$ here.
                        
                        Since all moving curves in Figures \ref{fig:2} and \ref{fig:3} become horizontal lines $\{s_1=C\}$ (thus not transversally hitting the $s$-axis),
                        whirls as in Figure \ref{fig:11} can never happen 
                        when $h=0$
                         and only oscillations along $s_1$-direction can  take place.
                        An exception is  $C=0$ which leads to a totally geodesic hypersphere.
                        With the end value $s_1$ of local $h$-curve fixed, as $C\downarrow 0$ one period of oscillations, 
                        namely the union of the local $h$-curve and its $s_1$-phase reflection,
                        will limit to a pair of geodesics joint at $p=(0,0,1)$ in $\mathbb S^2$, 
                        e.g.   $(\pm \cos s ,\, 0,\, \sin s)$ for $s\in[0, \frac{\pi}{2}]$.
                        The angle $\pi$ between the pair was in fact computed in Lemma 10.1 of \cite{L-Z} with $k_1=0$ therein.
                        
                         \subsection{Case of negative $h$. }
                         Unlike the Euclidean situations, the case of Delaunay hypersurfaces  in spheres can have oscillating phenomena for negative $h$ as well. 
                         Moreover, an amusing missing type of Delaunay hypersurfaces in spheres of negative constant mean curvature $h$ will be explored.
                         The philosophy to predict this 
                         is the following.
                          For $h>0$, like in Figure \ref{fig:1} from unduloid  to noduloid the deformation passes through union of spheres.
                          As we shall see that, for $h<0$, there are also ``unduloid" and  ``noduloid" types.
                         So, similarly, there must be one additional transition type connecting  them.
                         
                         To be consistent with Figures \ref{fig:1} and \ref{fig:2} and Proposition \ref{ppp2} 
                         we assume $h>0$ and denote the constant mean curvature by $-h$.
                         Using previously given symbols we arrive at the following.
                         
                           \begin{prop}\label{pp8}
                               When $C\in (\frac{h}{n-1}, C_{-h})$,
                               similarly as in Figure \ref{fig:4}
                               one can get an analytic 
                               $(-h)$-curve oscillating along $s_1$-dircetion 
                               which induces an immersion of constant mean curvature $-h$
                               from $\R\times S^{n-2}$ into $\mathbb S^n$;
                                when  $C\in (0,\frac{h}{n-1})$,
                                similarly as in Figure \ref{fig:11}
 one gets an analytic $(-h)$-curve 
                               which generates a hypersurface
of constant mean curvature $h$, with whirls occurring in local model.
                               \end{prop}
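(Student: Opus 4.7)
The plan is to model the proof directly on Propositions \ref{pp5} and \ref{pp7}, substituting $-h$ for $h$ and invoking Proposition \ref{ppp2} in place of its positive-$h$ counterpart. Throughout, I read the graphs $R(\,\cdot\,,C,-h)$ against $L(\,\cdot\,)$ from the lower half of Figure \ref{fig:2}, and I use \eqref{n4} (with $h$ replaced by $-h$) for $\dot s_1$ on the admissible open interval $I(C,-h)=(z_L^{-h},z_R^{-h})$.

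For the first subcase $C\in\left(\frac{h}{n-1},C_{-h}\right)$, Proposition \ref{ppp2} tells us that $R(\,\cdot\,,C,-h)$ stays strictly positive on $I(C,-h)$ and meets $L$ transversally at the two endpoints. An analogue of Proposition \ref{pp4} (obtained simply by reading the same picture) then gives $\dot D\ne 0$ at $z_L^{-h}$ and $z_R^{-h}$, which ensures that the total $s_1$-excursion across the interval is finite. I would then mimic Proposition \ref{pp5}: reflect the local piece repeatedly in the $s_1$-direction as in Figure \ref{fig:4}, observe that adjacent pieces are $(-h)$-CMC and meet at extremal points so the matching is automatically $C^2$, and finally apply Morrey's regularity \cite{M1,M2} to upgrade to analytic on all of $\mathbb R$. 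The resulting map from $\mathbb R\times S^{n-2}$ to $\mathbb S^n$ is then an analytic immersion of constant mean curvature $-h$.

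For the second subcase $C\in\left(0,\frac{h}{n-1}\right)$, the function $R(\,\cdot\,,C,-h)$ has a unique interior zero $\hat s$, where $\dot s_1$ vanishes while its geometric sign must flip to preserve the construction (cf.\ the $\pm$ discussion opening \S\ref{S3}). I would then transcribe the mirror-flip plus reflection procedure of Proposition \ref{pp7}: first flip across a vertical line $\{s_1=\hat\theta\}$ the portion of the local curve on which $\dot s_1$ would otherwise carry the wrong sign, which reverses the induced unit normal and flips the sign of the mean curvature on that segment, so that the assembled curve realizes a single globally consistent CMC value; then reflect the combined curve repeatedly along the $s_1$-direction as in the first subcase to obtain the global closed curve of Figure \ref{fig:11}. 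Whirls arise precisely from the doubling-back in $s_1$ produced by the flip at $\hat s$, and analyticity follows by the same Morrey argument.

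The main obstacle I anticipate is the bookkeeping of signs and of the unit-normal convention in the second subcase: one must track carefully which piece generates which sign of the mean curvature before and after the mirror flip, in order to conclude — as the statement asserts — that the assembled hypersurface has constant mean curvature $h$ under a globally chosen normal, rather than $-h$. Beyond that, the proof is a direct negative-$h$ transcription of the arguments already developed for Propositions \ref{pp5} and \ref{pp7} in \S\ref{S4}.
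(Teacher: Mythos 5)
Your proposal is correct and follows exactly the paper's route: the paper proves Proposition \ref{pp8} by simply citing the arguments of Propositions \ref{pp5} and \ref{pp7}, which is precisely the negative-$h$ transcription you carry out (reflection extension, $C^2$ matching at extremal joint points, Morrey regularity, and the mirror-flip with normal-sign bookkeeping for the whirl case). Your elaboration just makes explicit what the paper leaves implicit.
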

                               \begin{proof}
                               The same as the arguments for  Propositions \ref{pp5} and \ref{pp7}.
                               \end{proof}
                               \begin{rem}
                               The oscillating curves form unduloid type
                               and  those with whirls in local mode give noduloid type.
                               It should be mentioned that
                               the noduloid type is the exactly same  as in Figure \ref{fig:11}
                               but with the other unit normal vector field automatically induced by the construction (i.e., decided by the solid curve part in Figure \ref{fig:10}).
                               However, by checking oscillating ranges in $s$,
                               it is clear that unduloids for negative and non-negative $h$ are completely different sets.
                               \end{rem}

                       {\ }
                       
                       The missing type occurs when $C=\frac{h}{n-1}$ which can be viewed as a limit type.
                       By fixing plus sign in \eqref{n4} 
                       we have the following asymptotic result.
                       
                       \begin{lem}\label{l1}
                       For $h>0$ and  $C=\frac{h}{n-1}$,
                       every $(-h)$-curve in local model (see Figure \ref{fig:2}) limits to $p=(0,0,1)$
                       and along the curve
                       it follows that
                       $\lim_{s\uparrow \frac{\pi}{2}}\dot s_1=\frac{h}{2}$.
                       \end{lem}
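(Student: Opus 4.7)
The plan is to substitute $C = \frac{h}{n-1}$ (and replace $h$ by $-h$) into \eqref{n4}, then perform an asymptotic analysis as $s\uparrow \frac{\pi}{2}$ and extract the leading order behavior. The computation of the curve's limit is essentially immediate; all the content sits in the balance of orders inside $\dot s_1$.

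The first step is to observe that with $C=\frac{h}{n-1}$, the factor $C-\frac{h}{n-1}\sin^{n-1}s$ appearing in the $(-h)$-version of \eqref{n4} equals $\frac{h}{n-1}(1-\sin^{n-1}s)$. Using $1-\sin^{n-1}s=(1-\sin s)\sum_{k=0}^{n-2}\sin^k s$ and $1-\sin s=\frac{\cos^2 s}{1+\sin s}$, one gets
$$1-\sin^{n-1}s=\frac{n-1}{2}\cos^2 s\,(1+o(1))\quad\text{as } s\uparrow \tfrac{\pi}{2},$$
and hence $C-\frac{h}{n-1}\sin^{n-1}s=\frac{h}{2}\cos^2 s\,(1+o(1))$. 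This is the key asymptotic identity.

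Next I would analyze the expression under the square-root sign in the denominator of \eqref{n4}. Since $\sin^{2(n-2)}s\to 1$, the leading term $\cos^2 s\sin^{2(n-2)}s$ is $\cos^2 s(1+o(1))$, while the subtracted square $(C-\frac{h}{n-1}\sin^{n-1}s)^2$ is $O(\cos^4 s)$; thus their difference equals $\cos^2 s(1+o(1))$, which is positive on some left neighborhood of $\pi/2$. In particular this confirms that $\frac{\pi}{2}$ lies in the closure of $I(C,-h)$, so along the local $(-h)$-curve $s$ does approach $\frac{\pi}{2}$. Plugging these estimates into $\dot s_1$ gives
$$(\dot s_1)^2=\frac{\tfrac{h^2}{4}\cos^4 s\,(1+o(1))}{\cos^2 s\cdot \cos^2 s\,(1+o(1))}=\frac{h^2}{4}(1+o(1)),$$
so with the chosen plus sign $\lim_{s\uparrow \pi/2}\dot s_1=\frac{h}{2}$. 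The curve limit is then trivial: since $\gamma(s)=(\cos s\,e^{is_1(s)},\sin s)\in\mathbb S^2\subset\C\oplus\R$ and $|\cos s\,e^{is_1(s)}|=\cos s\to 0$ regardless of the behavior of the phase $s_1(s)$, we obtain $\gamma(s)\to(0,0,1)=p$.

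The only delicate point is the order-counting in the middle step: the numerator and the (subtracted) square inside the square root are \emph{both} of order $\cos^4 s$, so an incorrect expansion would produce either $0$ or $\infty$ in the limit; the finite nonzero answer $h/2$ depends crucially on the leading term of the dominating factor $\cos^2 s\sin^{2(n-2)}s$ being of order $\cos^2 s$ (one order lower than the subtracted square), together with the precise factor $\frac{n-1}{2}$ in the expansion of $1-\sin^{n-1}s$ which cancels the $\frac{1}{n-1}$ coming from $C$. Once this bookkeeping is done the rest is routine.
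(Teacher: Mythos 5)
Your proposal is correct and is essentially the paper's own argument: the paper expands in $\Delta s=\frac{\pi}{2}-s$ (using $\frac{h}{n-1}(1-\sin^{n-1}s)=\frac{h}{2}(\Delta s)^2+\dots$ and $\cos s=\Delta s+\dots$), which is the same leading-order bookkeeping you carry out in powers of $\cos s$ via the factorization $1-\sin^{n-1}s=(1-\sin s)\sum_{k=0}^{n-2}\sin^k s$. Your added remarks on positivity of the radicand near $\frac{\pi}{2}$ and on $\gamma(s)\to p$ are harmless elaborations of points the paper treats as immediate.
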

                       \begin{proof}
                       Let $\Delta s=\frac{\pi}{2}-s$.
                       Note that, for $n\geq 3$, 
                       we have $\frac{h}{n-1}-\frac{h}{n-1}\sin^{n-1}s=\frac{h}{2} \cdot (\Delta s)^2+$ high order terms
                       and $\cos s=\Delta s+$ high order terms.
                       So \eqref{n4} with plus sign becomes
                       $$
                       \lim_{s\uparrow \frac{\pi}{2}} 
                                       \dot s_1=
                                        \lim_{\Delta s\downarrow 0} 
                                                      \sqrt{
                                                               \dfrac{ \left( \frac{h}{2}\right)^2 (\Delta s)^4}
                                                               {(\Delta s)^2\big[(\Delta s)^2- \left( \frac{h}{2}\right)^2(\Delta s)^4\big]}
                                                         }
                                                         =
                                                         \frac{h}{2}.
                       $$
                       \end{proof}
                       
                       Lemma \ref{l1} leads to two useful consequences.
                       The first is to ensure that the $s_1$-value along the peculiar $(-h)$-curve for $C=\frac{h}{n-1}$  increases  to a finite number as $s\uparrow \frac{\pi}{2}$.
                       The second is to help understanding the tangential behavior in the limit.
                       
                         \begin{lem}\label{l2}
                         When approaching $p=(0,0,1)$, every $(-h)$-curve in local model is tangential to a geodesic $(\cos s\cdot q, \sin s)$ for some suitable $q\in \mathbb S^1\subset \mathbb C$.
                              \end{lem}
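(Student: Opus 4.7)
The plan is to exploit Lemma \ref{l1} to first identify the candidate geodesic, and then compare tangent vectors at the limit point.

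First, I would use Lemma \ref{l1} to show that $s_1(s)$ itself admits a finite one-sided limit as $s\uparrow \frac{\pi}{2}$. Since $\dot s_1$ is continuous on the parameter interval and converges to the finite value $\frac{h}{2}$ as $s\uparrow \frac{\pi}{2}$, it is bounded on $[\frac{\pi}{2}-\epsilon,\frac{\pi}{2})$ for some small $\epsilon>0$. Integration then shows that $s_1^* := \lim_{s\uparrow \frac{\pi}{2}} s_1(s)$ exists and is finite. Define $q := e^{is_1^*}\in \mathbb S^1\subset \mathbb C$; this is the candidate target geodesic $g(s)=(\cos s\cdot q,\,\sin s)$.

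Second, I would compare tangent vectors at $p$. Writing $\gamma(s)=(\cos s\cdot e^{is_1(s)},\,\sin s)$ and differentiating yields
\[
\dot\gamma(s)=\Bigl(e^{is_1(s)}\bigl(-\sin s+i\cos s\cdot \dot s_1(s)\bigr),\,\cos s\Bigr).
\]
As $s\uparrow \frac{\pi}{2}$, the factor $\cos s\cdot \dot s_1(s)$ tends to $0$ (by Lemma \ref{l1} the second factor is bounded, while $\cos s\to 0$), while $e^{is_1(s)}\to q$, $\sin s\to 1$ and $\cos s\to 0$. Therefore $\dot\gamma(s)\to(-q,\,0)$. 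A direct computation for the geodesic gives $\dot g(\tfrac{\pi}{2})=(-q,\,0)$ as well.

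Since both curves approach $p=(0,0,1)$ as $s\uparrow\frac{\pi}{2}$ and their velocity vectors share the common limit $(-q,0)$, they are tangent at $p$. The main (and only) subtle point is checking that $\gamma$ actually extends continuously to $s=\frac{\pi}{2}$ with a well-defined tangent direction; this is precisely what Lemma \ref{l1} furnishes, so the remainder is a routine comparison and no further analytic obstacle arises.
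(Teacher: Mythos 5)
Your proposal is correct and follows essentially the same route as the paper: both arguments rest on Lemma \ref{l1} giving a finite limiting phase $s_1^*$ (hence the candidate geodesic $q=e^{is_1^*}$) and on the vanishing of $\cos s\cdot\dot s_1$ as $s\uparrow\frac{\pi}{2}$, which is exactly the quantity the paper interprets as the slope of the $(-h)$-curve relative to $\Gamma_q$. Your explicit computation of $\dot\gamma$ in ambient coordinates just makes the paper's geometric observation concrete, and it is sound.
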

                              \begin{proof}
                              Suppose that the $(-h)$-curve in local model ends up with a limiting phase $\theta$.
                              Set  $q=e^{i\theta}$.
                              Denote $(\cos s\cdot q, \sin s)$ for $s\in [0, \frac{\pi}{2}]$ by $\Gamma_q$.
                              Then
                              in $\mathbb S^2$ it can be observed that, when $\Delta s\downarrow 0$,
                               the quantity 
                               $(\cos s)\cdot \dot s_1=(\sin\Delta s)\cdot \dot s_1$
                             approaches the  slope of the $(-h)$-curve with respect to $\Gamma_q$.
                              By the finiteness of $\dot s_1$ in Lemma \ref{l1} one can easily read it out that the limit of the slope  
                              is zero at $p$.
                              \end{proof}
                              
                              As a result, up to a phase gauge, there is a unique $(-h)$-curve in local model  limiting to the point $p$ tangential to some geodesic through $p$ for every $h>0$.
                              In fact, the local $(-h)$-curve can develop to a global one which is extremely natural in view of generating curves in $\mathbb S^2$.
                              However, the extension or, perhaps more precisely, assembling of local pieces at $p$ is a bit different but still very geometric.
                              \begin{thm}\label{flower}
                              For every $h>0$, there exists a unique global (complete) $(-h)$-curve passing through $p$.
                              Consequently, the Delaunay hypersurfaces of constant mean curvature $-h$ contain a new type
                              which serves as a bridge jointing the unduloid type  Delaunay hypersurfaces and noduloid type  Delaunay hypersurfaces.
                              \end{thm}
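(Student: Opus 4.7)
The plan is to combine Lemmas \ref{l1} and \ref{l2} with a geometric gluing at the coordinate-singular point $p=(0,0,1)$. The key observation is that $C=\frac{h}{n-1}$ is precisely the boundary value in Proposition \ref{ppp2}: it is the unique positive $C$ for which the graph of $R(\cdot,C,-h)$ meets the $s$-axis at the very point $s=\frac{\pi}{2}$ where $L$ vanishes, so it separates the unduloid regime $C\in(\frac{h}{n-1},C_{-h})$ from the noduloid regime $C\in(0,\frac{h}{n-1})$. For this $C$ on its maximal $s$-interval $(s^{*},\frac{\pi}{2})$, Lemma \ref{l1} gives $\dot s_1(s)\to h/2$ so $\int^{\pi/2}\dot s_1\,ds$ converges, meaning the local $(-h)$-curve reaches $p$ at a well-defined terminal phase $s_1^{*}$ in finite parameter; Lemma \ref{l2} then shows the curve is tangent at $p$ to the geodesic $\Gamma_{q}$ with $q=e^{is_1^{*}}$.

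To extend past $p$, I would glue a second local $(-h)$-curve that exits $p$ tangent to the same geodesic but from the opposite side (the antipodal $e^{i\theta}$-direction in the collapsing $\mathbb{S}^1$-factor), which in the $(s,s_1)$ chart corresponds to a jump of $\pi$ in $s_1$ across $p$. The gluing is legitimate because \eqref{n4} is invariant under the sign flip $\dot s_1\mapsto -\dot s_1$ and because both pieces agree in position and tangent direction at $p$. Off $p$ the extended curve is analytic by the same Morrey-regularity argument used in Proposition \ref{pp5}. Iterating this assembly at each subsequent visit to $p$ produces a complete analytic $(-h)$-curve: by the rotational symmetry in the $s_1$-phase each petal is a rigid copy of the first, so each contributes the same fixed arc length and the total length is infinite. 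The main obstacle is precisely this matching at $p$, where the $(s,s_1)$ parametrization degenerates; the cleanest remedy is to verify smoothness intrinsically on $\mathbb{S}^2$, where a $C^1$ curve crossing a pole tangent to a geodesic is automatically smooth in ambient coordinates, and to check that the geometric $h$-CMC condition is preserved across $p$ because the unit normal limits from both sides agree.

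For uniqueness, any global $(-h)$-curve passing through $p$ must, in local model near $p$, be a local $(-h)$-curve reaching $p$; but reaching $p$ forces the graph of $R(\cdot,C,-h)$ to touch the $s$-axis at $s=\frac{\pi}{2}$, hence $C=\frac{h}{n-1}$. Up to the phase gauge in $s_1$, Lemma \ref{l2} pins down the incoming tangent, and the gluing construction determines the continuation uniquely. The bridge property is then immediate from Proposition \ref{pp8}: slightly smaller $C$ produces noduloid whirls and slightly larger $C$ produces unduloid oscillations, and the flower type sits exactly at the critical transition $C=\frac{h}{n-1}$, furnishing the new Delaunay hypersurface asserted in the theorem.
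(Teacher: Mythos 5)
Your proposal is correct and takes essentially the same route as the paper: the unique local $(-h)$-curve with $C=\frac{h}{n-1}$ reaching $p$ (Lemmas \ref{l1} and \ref{l2}), continued through $p$ by gluing the antipodal mirror piece --- which is precisely the paper's reflection about the plane spanned by $\overrightarrow{0p}$ and $N_p$, viewed in the $(s,s_1)$-chart as a $\pi$-jump in $s_1$ with the sign flip $\dot s_1\mapsto-\dot s_1$ --- then iterated to a complete curve, with analyticity via Morrey regularity as in Proposition \ref{pp5} and uniqueness forced by $C=\frac{h}{n-1}$ and the phase gauge. The only caveat is your aside that a $C^1$ curve tangent to a geodesic at the pole is ``automatically smooth'' (false as a blanket claim), but the argument you actually rely on --- matching position, tangent and limiting unit normal, the same CMC condition on both sides, then elliptic regularity --- is exactly the paper's justification, so this does not affect the proof.
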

                              \begin{proof}
                              According to Lemma \ref{l2}, a normal vector field $N$ along $(-h)$-curve with $C=\frac{h}{n-1}$ in local model 
                              can have a $C^0$ limit $N_p$ at $p$.
                              Then we reflect the $(-h)$-curve at $p$ about the plane spanned by $\overrightarrow{0p}$ and $N_p$ in $\R^3$.
                             In this way  the local $(-h)$-curve $C^1$ extends through $p$ (with $C^0$ extension of $N$) as in the aerial view Figure \ref{fig:12} over point $p$.
                             Similarly, the extended $(-h)$-curve is $C^2$ at the joint point $p$.
                             Thus, the local $(-h)$-curve can extend repeatedly to a complete analytic $(-h)$-curve
                             which generates an $(-h)$-CMC immersion from $\R\times  S^{n-2}$ into $\mathbb S^{n}$.
                             The immersed hypersurface forms a flower type Delaunay hypersurface.
%
     %
           %
                                                           \end{proof}
                               \begin{figure}[ht]  
                                	\includegraphics[scale=0.75]{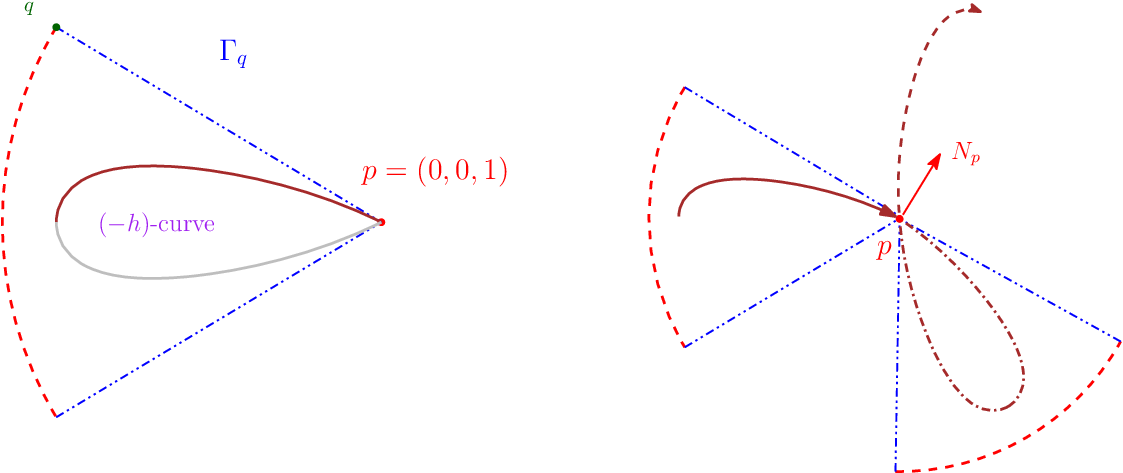}
                             \caption{Missing (flower) type which crosses $p$}
                               \label{fig:12}
 \end{figure}
                           \begin{rem}
                           The  mean curvature of the flower type Delaunay hypersurface induced by the $(-h)$-curve extended through $p$ via reflection by $N_p$ can be verified crossing $\{0+0i\}\times \mathbb S^{n-2}$ as follows.
                           First, by Lemma \ref{l1},
                           $\tilde \eta_0$ in \eqref{te0} at $p$ (equal to $(e^{i\theta},0,\cdots , 0)$) corresponds to $N_p$.
                           Hence, all curvatures arising from the $\mathbb S^{n-2}$ factor contributes nothing to the mean curvature of the flower type at $p$.
                           
                           So one only needs to compute the curvature $\kappa$ of the $(-h)$-curve at $p$.
                           Since approaching $p$ limits to a curve in polar coordinate of $\R^2$, namely $\rho:=\Delta s=\Delta s(s_1)=\rho(s_1)$,
                           it follows that, with respect to $-N$,
                           the curvature is given by
                            $$
                            \kappa(s_1)=\dfrac{\rho^2+2\left(\frac{d\rho}{ds_1}\right)^2-\rho\,\frac{d^2\rho}{ds_1^2}}
                            {
                                  \left\{
                                          \rho^2+
                                                        \left(
                                                                   \frac{d\rho}{ds_1}
                                                                    \right)^2
                                                                            \right\}
                                                                            ^{\frac{3}{2}}
                                                                            }\, .
                            $$
                            As 
                            $\frac{d\rho}{ds_1}=\left(\frac{ds_1}{d\rho}\right)^{-1}$
                            and
                            $\frac{d^2\rho}{ds_1^2}=-{\frac{d^2s_1}{d\rho^2}}\left({\frac{d s_1}{d\rho}}\right)^{-3}$,
                            according to Lemma \ref{l1}
                            we can see that  the curvature 
                            %
                            of the  curve at $p=(0,\theta)$
                            is $2\big(\frac{h}{2}\big)^{-2}\big/\big(\frac{h}{2}\big)^{-3}=h$.
                           \end{rem}
                       
                       \begin{rem}\label{aerial}
                       For $h$ fixed, if we give up the insistence on the choice of unit normal vector field
                       and take an obvious ``continuous" choice $\tilde N$ associated with the deformation of $C$ instead.
                       By decreasing $C$, 
                       a vivid video of deformation of Delaunay hypersurfaces can be obtained.
                       The moving slides start from the static type by \eqref{n00} to the unduloid type;
                       then pass through the union of spheres to the nodoid type;
                       with the drip area enlarging, 
                       finally come to the flower type in Theorem \ref{flower};
                       after that, meet the  unduloid type
                        and end up with another static (corresponding to \eqref{n00} for $-h$) type.
                        All of these hypersurfaces have constant mean curvature $h$ with respect to $\tilde N$.
                         \begin{figure}[ht]  
                                	\includegraphics[scale=0.7]{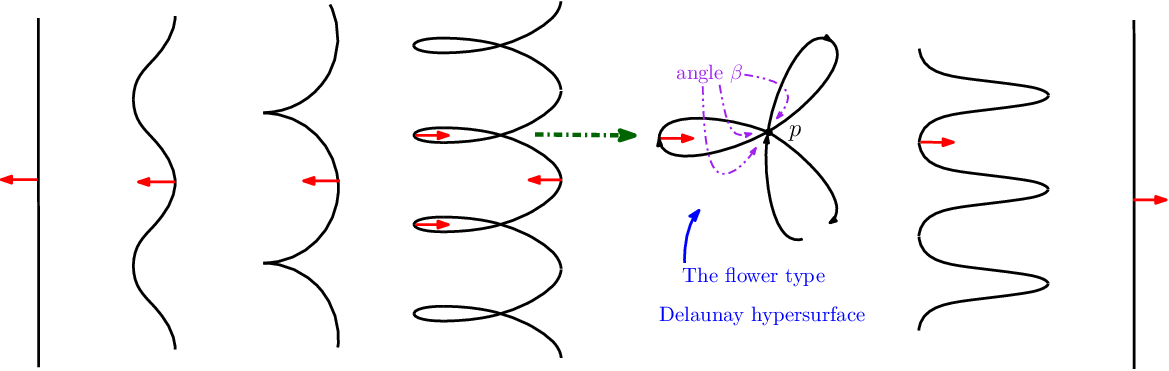}
                             \caption{Deformation of types by decreasing the value of $C$ for $h>0$}
                               \label{fig:13}
                                \end{figure}
                        However, note that in Figures \ref{fig:12} and \ref{fig:13}  illustrated petals are particularly chosen to be assembled by reflections along different $N_p$ repeatedly for an analytic $(-h)$-curve 
                        which induces an $(-h)$-CMC immersion of $\R\times  S^{n-2}$.
                        Actually,  the flower type can be viewed as a limit mapping from the point of view encoded in Definition \ref{tW} of adjusted width and Figure \ref{fig:14}.
                       \end{rem}
                       
                       As $C$ runs all possible values, Figure \ref{fig:13} exhausts all CMC Delaunay hypersurfaces when $h\neq 0$.
                       Hence, we finish completing the category of Delaunay hypersurfaces in $\mathbb S^n\subset \mathbb C\oplus \R^{n-1}$ 
                       through moving $\mathbb S^{n-2}\subset \R^{n-1}$ by a generating curve $\gamma \subset \mathbb S^3\subset \mathbb C\oplus \R$.
                       
                       \begin{cor}\label{complete}
                       For $h\neq 0$, up to a phase gauge in $s_1$ (or equivalently a rotation fixing $p=(0,0,1)$),
                       all Delaunay hypersurfaces of constant mean curvature $h$ with respect to some unit normal vector field in $\mathbb S^n$ are included in Figure \ref{fig:13}.
                       \end{cor}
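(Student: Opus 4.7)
The plan is to exploit the fact that the first integral of the $h$-CMC equation \eqref{e01} introduces exactly one free constant $C \in \mathbb{R}$, reducing the classification to an enumeration over the $C$-axis. I would proceed in three stages: first, reduce any Delaunay hypersurface to a singly spiral generating curve; second, reduce this curve to a value of $C$ via \eqref{n3.5}; third, match each range of admissible $C$ to a type already built in \S 3, \S 4, and Theorem \ref{flower}.

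First I would argue that every $h$-CMC Delaunay hypersurface in $\mathbb{S}^n \subset \mathbb{C}\oplus \mathbb{R}^{n-1}$ arises, after conjugating by an isometry fixing $p=(0,0,1)$, from the singly spiral product \eqref{n1} applied to some generating curve $\gamma \subset \mathbb{S}^2$. Indeed, rotational symmetry around the fixed factor $\mathbb{S}^{n-2}$ forces the profile to live in $\mathbb{S}^2$, and the phase $s_1$ is defined only up to a global shift, which is the phase gauge named in the statement. Passing to arc-length parameter $s$ on the underlying $\mathbb{S}^1$-component, the curve takes the normal form $\gamma(s) = (\cos s \cdot e^{i s_1(s)}, \sin s)$, and the $h$-CMC condition becomes exactly equation \eqref{e01}.

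Next, by the integration carried out in \S \ref{S2}, every solution of \eqref{e01} satisfies \eqref{n3.5}--\eqref{n4} for some $C \in \mathbb{R}$; the sign in \eqref{n4} can be absorbed in the reflection/mirror-flip procedure of \S \ref{S4}. Thus each Delaunay hypersurface is encoded by a single real parameter $C$. I would then partition the admissible $C$-values as in Remark \ref{aerial}: without loss of generality fix $h>0$ (reversing $N$ swaps the sign); then
\[
C \in \{C_h\},\ (0, C_h),\ \{0\},\ (-\tfrac{h}{n-1}, 0),\ \{-\tfrac{h}{n-1}\}
\]
yields respectively the static tube from \eqref{n00}, the unduloid (Proposition \ref{pp5}), the union of spheres (Proposition \ref{pp6}), the nodoid (Proposition \ref{pp7}), and the flower type (Theorem \ref{flower}) --- and the symmetric ranges for $C > 0$ give the corresponding types of CMC $-h$ via Proposition \ref{ppp2}. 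For $C$ outside $[-C_{-h}, C_h]$ the denominator of $\Theta$ in \eqref{n3.5} is nowhere positive, so no $h$-curve exists; this verifies that Figure \ref{fig:13} is exhaustive.

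The main subtlety I would expect is the uniqueness-within-type step: for each $C$ in the interior cases, I must verify that the \emph{global} extension (by reflections, mirror flips, or the flower construction at $p$) is forced on us rather than being one choice among several. For the interior points this follows because the second-order ODE \eqref{e01} together with the value of $C$ and a single boundary condition (e.g.\ the extremal point of Figure \ref{fig:4}) determines $\gamma$ uniquely; the reflection/flip steps are the analytic continuations dictated by Propositions \ref{pp4}--\ref{pp5}. For the flower type the geometric reflection at $p$ along $N_p$ is forced by Lemmas \ref{l1} and \ref{l2} together with analyticity, as argued inside the proof of Theorem \ref{flower}. Collecting these cases completes the classification.
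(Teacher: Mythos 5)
Your proposal is correct and follows essentially the same route as the paper, which gives no separate proof of Corollary \ref{complete} but treats it as a summary of the preceding analysis: the first integral of \eqref{e01} encodes every $h$-curve by the single constant $C$ via \eqref{n3.5}--\eqref{n4}, and the ranges of $C$ are exhausted by Propositions \ref{pp5}--\ref{pp8}, Theorem \ref{flower}, and the deformation picture of Remark \ref{aerial}/Figure \ref{fig:13}. Your write-up merely makes explicit (reduction to a generating curve, emptiness of the range outside $[-C_{-h},C_h]$, forcedness of the analytic extensions) what the paper leaves implicit, with only a harmless notational conflation of the paper's two parametrizations (the flower type at $C=\frac{h}{n-1}$ for the $(-h)$-curve versus $C=-\frac{h}{n-1}$ in the adjusted picture).
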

                       
                       {\ }
                       
                       \section{Descendent to (immersed or embedded) closed CMC hypersurfaces}\label{S5}
                           In this section,  we  search for  flower type Delaunay hypersurfaces in $\mathbb S^n$ which induce closed immersed CMC hypersurfaces,
                          consider the descendent of \eqref{n1} to immersions of $S^1\times S^{n-2}$ into $\mathbb S^n$
                           and show the existence of uncountably many embedded Delaunay hypersurfaces of positive constant mean curvatures  in $\mathbb S^n$.

                           From \eqref{n4}, we have (valid for all types)
                            \begin{equation}\label{n40}
                                \dot s_1=
                                                   \dfrac{
                                                              C+\frac{h}{n-1}\sin^{n-1}s
                                                              }
                                                           {   \cos s \cdot \sqrt  {
                                                            %
                                                              \cos^2s \sin^{2(n-2)}s -\left(C+\frac{h}{n-1}\sin^{n-1}s\right)^2                                                   
                                                              }
                                                }\, \ .
              \end{equation}
Set $J(C, h)\subset (0,\frac{\pi}{2})$ to be the  largest interval for $\cos s \sin^{n-2}s -C+\frac{h}{n-1}\sin^{n-1}s>0$, cf. Figures \ref{fig:3} and \ref{fig:10}.
                           \begin{lem}\label{5l}
                           Except the situation of Proposition \ref{pp6}, i.e., $h\geq 0$ and $C=0$,
                           an $h$-curve 
                            forms a closed curve in $\mathbb S^3$ if and only if the width function of a period 
                          \begin{equation}\label{Wdef}
                          W(h,C):=2\int_{J(C, h)} \dot s_1 ds\in \pi \mathbb Q.
                          \end{equation}
                           \end{lem}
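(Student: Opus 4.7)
The approach is to translate the closure of the generating curve $\gamma$ into a rationality condition on the $s_1$-increment accumulated over one period of the $s$-oscillation.

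First, by Propositions \ref{pp5}, \ref{pp7}, \ref{pp8} and Theorem \ref{flower}, every extended $h$-curve outside the excluded case has the form $\gamma(s)=\big(\cos s\cdot e^{is_1(s)},\,\sin s\big)$ with $s$ oscillating on $[z_L^h,z_R^h]$ via iterated reflections and flips. A single ``period'' of this $s$-oscillation consists of $s$ running from one endpoint of $\overline{J(C,h)}$ to the other and back; during such a period, by \eqref{n40} together with the reflective symmetry of the assembly, the net change in $s_1$ equals exactly
$$
W(h,C)=2\int_{J(C,h)}\dot s_1\, ds.
$$
This integral is finite: at any endpoint of $J(C,h)$ interior to $(0,\pi/2)$ the integrand has at worst an integrable square-root singularity by Proposition \ref{pp4}, and at the flower endpoint $\pi/2$ one has $\dot s_1\to h/2$ by Lemma \ref{l1}. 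Hence after $k$ full periods $\gamma$ reaches the point $\big(\cos s_0\cdot e^{i(s_1^0+kW(h,C))},\,\sin s_0\big)$, where $(s_0,s_1^0)$ is the initial data.

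Second, notice that in the ambient space $(\cos s\cdot e^{i\theta},\sin s)=(\cos s'\cdot e^{i\theta'},\sin s')$ iff $s=s'$ (as $s\in(0,\pi/2)$) and $\theta\equiv\theta'\pmod{2\pi}$. Thus $\gamma$ is closed precisely when it returns to its initial $(s,e^{is_1})$ data after finitely many periods, i.e., iff there exists $k\in\mathbb Z_{>0}$ with $kW(h,C)\in 2\pi\mathbb Z$. Since $\pi\mathbb Q=2\pi\mathbb Q$, this is exactly the condition $W(h,C)\in\pi\mathbb Q$.

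The main obstacle I expect is uniformly handling the flower type. There $\gamma$ passes through the pole $p=(0,0,1)$ where the angular coordinate $s_1$ degenerates, and successive petals are joined by reflection in the plane spanned by $\overrightarrow{0p}$ and $N_p$ rather than by a pure $s_1$-rotation. One must invoke the tangency content of Lemmas \ref{l1}–\ref{l2} to verify that, thanks to the reflective symmetry at $p$, each petal contributes the same net increment $W(h,C)$ to $s_1$, so that the counting in the second step carries over without change. The case $h\geq 0,\,C=0$ is excluded because, by Proposition \ref{pp6}, a single reflection already closes the curve into a round hypersphere, so the width formula (which measures one full iteration period) does not faithfully encode the relevant closure data there.
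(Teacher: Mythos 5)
Your treatment of the non-degenerate types (unduloid/nodoid oscillations that stay inside $(0,\frac{\pi}{2})$) is fine and coincides with the paper's view: per period the assembled curve is moved by the rotation $s_1\mapsto s_1+W(h,C)$ about the axis through $p$, and closure is equivalent to $W\in 2\pi\mathbb Q=\pi\mathbb Q$; the finiteness of the integral via Proposition \ref{pp4} is also the right ingredient. The paper dismisses exactly this part as ``clear by the construction''; the whole content of the lemma is the flower type, and that is where your argument has a genuine gap. You defer it to a verification that ``thanks to the reflective symmetry at $p$, each petal contributes the same net increment $W(h,C)$ to $s_1$, so that the counting in the second step carries over without change,'' but this is not what happens, and it is never established. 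The gluing at $p$ is the reflection of $\mathbb S^2$ in the plane spanned by $\overrightarrow{0p}$ and $N_p$; since $N_p$ is horizontal and orthogonal to the limiting tangent geodesic of Lemma \ref{l2}, in the longitude coordinate this reflection reads $s_1\mapsto 2\theta+\pi-s_1$ (passing through the pole shifts the longitude by $\pi$ and reverses its sense). Consequently a petal spanning phases $[\theta_0,\theta_0+\beta]$ with $\beta=W(-h,\frac{h}{n-1})$ is followed by a petal spanning $[\theta_0+\beta+\pi,\theta_0+2\beta+\pi]$: the per-petal phase advance is $\beta+\pi$, not $\beta$, and only the composition of two successive reflections is a genuine rotation, by $2\beta$, about $p$. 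This is precisely the paper's argument (adjacent angle $\pi-2\beta$ after one reflection, rotation by $2\beta$ after two), and it yields closure iff $\beta\in\pi\mathbb Q$.

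The reason your shortcut does not produce a wrong statement is only that $\pi\mathbb Q$ is stable under adding $\pi$, so ``advance $\beta$ per period'' and ``advance $\beta+\pi$ per petal'' (equivalently ``rotation $2\beta$ per two petals'') lead to the same rationality criterion; but you never verify the mechanism, and the one you propose to verify is false as stated. In addition, since after an odd number of petals the configuration is a reflected (orientation-reversed in $s_1$) copy rather than a rotated one, the ``returns to its initial $(s,e^{is_1})$ data'' counting needs the small extra remark that closure after an odd number of petals forces closure after twice as many, so the criterion is unaffected. To repair the proof, replace the deferred claim by the explicit description of the gluing isometry at $p$ (using Lemmas \ref{l1}--\ref{l2} and the construction in Theorem \ref{flower}) and run the two-reflection/rotation argument; the rest of your write-up, including the exclusion of the case $h\geq 0$, $C=0$ via Proposition \ref{pp6}, can stand as is.
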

                           \begin{proof}
                           By the construction this is clear for curves not bothering $s=0$ or $s=\frac{\pi}{2}$.
                           For the flower type, let us set $\beta=W(-h, \frac{h}{n-1})$ as shown in Figure \ref{fig:13}.
                           Then the adjacent angle between two petals via one reflection is $\pi-2\beta$
                           and $2\beta$ by two repeated reflections.
                           The latter gives a $2\beta$-rotation about $p$.
                           Hence, the extended $(-h)$-curve of flower type is closed if and only if $\beta\in \pi \mathbb Q$.
                           \end{proof}
                           As illustrated in Figure \ref{fig:13}, let us mention the deformation of widths.
                           By considering line types in Figure \ref{fig:13} as limit situations,
                           $W(h,\cdot)$ is continuous in the first four and $W(-h,\cdot)$ in the last two pictures of Figure \ref{fig:13} respectively.
                           However, a gap occurs when $C=\frac{h}{n-1}$.
                           So, we introduce the concept of adjusted width according to geometric meaning of Figure \ref{fig:13}.
                                              \begin{figure}[ht]  
                                	\includegraphics[scale=0.86]{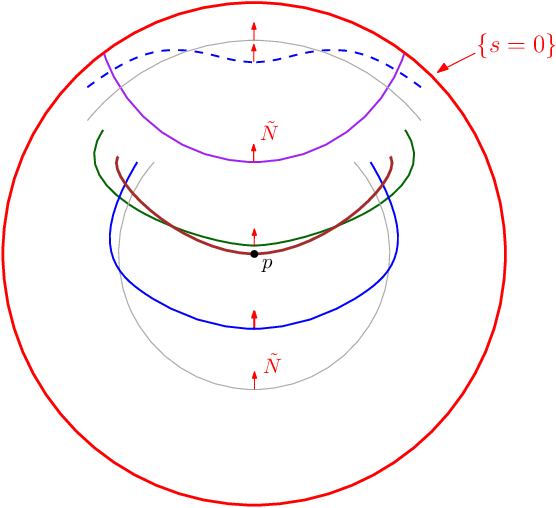}
                             \caption{Aerial view of  Figure \ref{fig:13} for fixed $h>0$ with respect to $\tilde N$}
                               \label{fig:14}
                                \end{figure}
                           \begin{defn}\label{tW}
                           Given $h\geq 0$, the adjusted width $\tilde W(h, \cdot)$ over $(-C_{-h}, C_h)$ is defined as
                           \begin{equation}\label{tw}
\tilde W(h, C):=\left\{
\begin{array}{cc}
W(h, C) & \ \ \ \ \ \ \ \ \ \ C\in ( -\frac{h}{n-1}, C_h),\\
\pi- W(-h, \frac{h}{n-1}) &   \ \ \ \ \ \ \ \ \ \  C=-\frac{h}{n-1},\\
2\pi-W(-h,-C) &   \ \ \ \ \ \ \ \ \ \  C\in ( -C_{-h}, -\frac{h}{n-1}).
\end{array}\right.
                           \end{equation}
                           \end{defn}
                         %
                           %
                                               \begin{lem}\label{twcinh}
                                                $\tilde W(h, \cdot)$ is continuous.
                                                \end{lem}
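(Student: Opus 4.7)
The plan is to establish continuity of $\tilde W(h,\cdot)$ on each of the two open sub-intervals $(-C_{-h},-\tfrac{h}{n-1})$ and $(-\tfrac{h}{n-1},C_h)$ separately, and then verify that the two one-sided limits at $C=-\tfrac{h}{n-1}$ both agree with $\pi-W(-h,\tfrac{h}{n-1})$. On each open sub-interval $\tilde W$ coincides up to a constant with $W(\pm h,\cdot)$, so it suffices to prove continuity of $C\mapsto W(\pm h,C)$ on its range. By Proposition \ref{pp4} each endpoint of $J(C,h)$ is a transverse zero of $D:=\cos^{2}s\sin^{2(n-2)}s-(C+\tfrac{h}{n-1}\sin^{n-1}s)^{2}$, so the implicit function theorem gives smooth dependence of $z_{L}^{h}(C),z_{R}^{h}(C)$ on $C$. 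Near each such endpoint the integrand $\dot s_{1}=(C+\tfrac{h}{n-1}\sin^{n-1}s)/(\cos s\sqrt{D})$ has an integrable $1/\sqrt{|s-z|}$ singularity; after the change of variable $s=z_{L}^{h}(C)+u(z_{R}^{h}(C)-z_{L}^{h}(C))$, $W(\pm h,C)$ becomes a proper integral on $[0,1]$ of a jointly continuous integrand times $1/\sqrt{u(1-u)}$, and dominated convergence gives continuity.

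The crucial step is the matching at $C_{0}:=-\tfrac{h}{n-1}$. I would run an asymptotic expansion of the integrand near $s=\tfrac{\pi}{2}$, using the expansions from Lemma \ref{l1}: with $u:=\tfrac{\pi}{2}-s$ and $C=C_{0}+\varepsilon$, one has $\cos s=u+O(u^{3})$, $R\approx \varepsilon-\tfrac{h}{2}u^{2}$ for the $+h$-family (and $R\approx \varepsilon+\tfrac{h}{2}u^{2}$ for the $-h$-family with $C'=\tfrac{h}{n-1}+\varepsilon$), and $L^{2}-R^{2}\approx u^{2}-\varepsilon^{2}$. The substitution $u=\varepsilon\cosh\tau$ then converts the piece of $\int\dot s_{1}\,ds$ on a shrinking neighborhood of $\tfrac{\pi}{2}$ into $\int_{0}^{\tau_{\varepsilon}}[\pm\operatorname{sech}\tau+O(\varepsilon)\cosh\tau]\,d\tau$, which tends to $\pm\tfrac{\pi}{2}$ as $\varepsilon\to 0$ since $\tau_{\varepsilon}\to \infty$ and $\int_{0}^{\infty}\operatorname{sech}\tau\,d\tau=\tfrac{\pi}{2}$. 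Combined with the bulk integral over $(z_{L}^{h},\tfrac{\pi}{2}-\delta)$, which converges to the critical integral by the previous paragraph, this yields
\[
\lim_{C\downarrow C_{0}}W(h,C)=\pi-W(-h,\tfrac{h}{n-1}),\qquad \lim_{C'\downarrow \tfrac{h}{n-1}}W(-h,C')=W(-h,\tfrac{h}{n-1})+\pi.
\]
The second identity gives $\lim_{C\uparrow C_{0}}[2\pi-W(-h,-C)]=2\pi-(W(-h,\tfrac{h}{n-1})+\pi)=\pi-W(-h,\tfrac{h}{n-1})$, matching the middle value of \eqref{tw}.

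The only delicate point is the emergence of the extra $\tfrac{\pi}{2}$-contribution from the collapsing tip near $\tfrac{\pi}{2}$ — geometrically, the half-turn around the pole $p$ that a shrinking whirl (for $C\downarrow C_{0}$) or a shrinking sliver between $z_{R}$ and $\tfrac{\pi}{2}$ (for $C'\downarrow \tfrac{h}{n-1}$) performs in the limit. This is precisely the $\pi$-jump of the raw width $W(\pm h,\cdot)$ that the adjusted-width Definition \ref{tW} is designed to absorb, which is why $\tilde W(h,\cdot)$ is continuous across the flower value.
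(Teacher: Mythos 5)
Your proof is correct and rests on the same underlying idea as the paper's: the adjusted width \eqref{tw} is designed to absorb a jump of $\pi=2\times\frac{\pi}{2}$ in the raw width on each side of the flower value $C=-\frac{h}{n-1}$, so continuity reduces to quantifying that jump coming from the degenerating tip at $s=\frac{\pi}{2}$. The difference is in execution: the paper's proof is a short sketch that delegates the $2\times\frac{\pi}{2}$ computation to an adjustment of Lemma 10.1 of \cite{L-Z}, whereas you derive it self-containedly — the expansions of Lemma \ref{l1} plus the substitution $u=\varepsilon\cosh\tau$ convert the tip contribution into $\int_{0}^{\infty}\operatorname{sech}\tau\,d\tau=\frac{\pi}{2}$ per half-period — and you also supply the routine continuity on the two open branches (transverse endpoints via Proposition \ref{pp4}, integrable square-root singularity, dominated convergence), which the paper leaves implicit; your two matching identities are exactly what continuity of \eqref{tw} at $C=-\frac{h}{n-1}$ requires, so your route buys a proof independent of the external reference. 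Two points to tighten, neither fatal: the $O(\varepsilon)\cosh\tau$ term integrates to $O(\delta)$ rather than $o(1)$, so you must let $\varepsilon\to 0$ first and $\delta\to 0$ afterwards, and on the $+h$ side the bulk integrand converges to \emph{minus} the flower integrand (this sign flip is precisely why the limit is $\pi-W(-h,\tfrac{h}{n-1})$ and not $\pi+W(-h,\tfrac{h}{n-1})$ — your displayed identity is right, but the text ``converges to the critical integral'' should say so); also your transversality argument does not literally cover the interior value $C=0$, where an endpoint of $J$ degenerates to $s=0$ and $D$ vanishes to higher order — a point the paper's own proof also passes over.
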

                                                \begin{proof}
                                                As the deformation moves,
                                                there is an amount $\pi$ jumping in both sides of the flower type.
                                                This is because the derivative of $\frac{h}{n-1}\sin^{n-1}s$ vanishes at $s=\frac{\pi}{2}$
                                                and the computation in Lemma 10.1 of \cite{L-Z} is still under control and can be adjusted to measure the jumps to be $2\times \frac{\pi}{2}$.
                                                \end{proof}
\begin{rem}\label{analytic}
In fact, the point $p=(0,0,1)$ is a singular point with respect to the parametrization $(s, s_1)$ not the geometric problem itself.
There is no problem at all for generating curves passing through $p$ and the continuity can be strengthened to be analytic, cf. Figure \ref{fig:14}, whenever the genuine singular set $\{s=0\}$ is avoided.
\end{rem}
  \begin{rem}
                           The adjusted width function actually exhibits a way to construct the flower type as a limit from its either side, see Figure \ref{fig:14}.
                           This perspective is different from the assembling thought in the proof of Theorem \ref{flower}.
                           \end{rem}
                             \begin{cor}
                                                $\tilde W(\cdot, \cdot)$ is continuous.
                                                \end{cor}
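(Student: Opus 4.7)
The plan is to upgrade Lemma \ref{twcinh}, which established continuity of $\tilde W(h,\cdot)$ for each fixed $h\geq 0$, to joint continuity in $(h,C)$ by rerunning the same analysis while simultaneously perturbing $h$. Away from the transition values $C=\pm\frac{h}{n-1}$, the adjusted width is (up to an additive $\pi$ or $2\pi$ and a reflection $C\mapsto -C$) the proper integral
\[
W(h,C)=2\int_{J(C,h)}\dot s_1(s;h,C)\,ds
\]
with integrand given by \eqref{n40}. Both the integrand and the endpoints of $J(C,h)$ depend jointly real-analytically on $(h,C)$: Proposition \ref{pp4} guarantees that the $s$-derivative of $\cos^2s\sin^{2(n-2)}s-(C+\frac{h}{n-1}\sin^{n-1}s)^2$ is nonzero at each endpoint whenever $C$ stays strictly between the relevant transition value and $\pm C_{\pm h}$, so the implicit function theorem supplies analytic dependence of the endpoints on $(h,C)$, and a square-root change of variable near each endpoint absorbs the integrable singularity uniformly in nearby parameters. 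This yields joint continuity (in fact analyticity) on each of the four open strata of Definition \ref{tW}.

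Next I would treat the transition loci $C=\pm\frac{h}{n-1}$, where one endpoint of $J(C,h)$ collapses to $s=\frac{\pi}{2}$ and the zero of the denominator becomes of higher order. The quantitative analysis already used in Lemma \ref{twcinh}, namely the computation from Lemma 10.1 of \cite{L-Z} combined with the asymptotics of Lemma \ref{l1}, shows that the one-sided limits of $W(\pm h,\cdot)$ as $C\to\pm\frac{h}{n-1}$ differ from $W(\mp h,\mp\frac{h}{n-1})$ by exactly $\pi$. Definition \ref{tW} was constructed precisely to cancel these $\pi$-jumps, so $\tilde W$ extends continuously across $C=\pm\frac{h}{n-1}$. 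To promote this to joint continuity in $(h,C)$ I would observe that all constants appearing in the jump calculation, namely the leading coefficients of $\cos s=\Delta s+O((\Delta s)^3)$ and of $\frac{h}{n-1}-\frac{h}{n-1}\sin^{n-1}s=\frac{h}{2}(\Delta s)^2+O((\Delta s)^4)$, are analytic in $h$, so the same $\frac{\pi}{2}$ contribution on each side is produced robustly as $h$ varies in a small neighborhood.

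Finally, the boundary cases $C=\pm C_{\pm h}$ (where $J(C,h)$ degenerates to a point and $\tilde W\to 0$, matching the static type \eqref{n00}) and $h\to 0$ (joining with the singly spiral minimal case of \cite{L-Z}, where by Remark \ref{aerial} the deformation flows continuously into the $h=0$ picture) are handled by the same dominated-convergence argument since $C_h$ itself is analytic in $h$.

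The main obstacle will be making the $\pi$-jump computation of Lemma \ref{twcinh} uniform as $h$ moves: one must show that the near-$p$ contribution to $W$ of an integration interval $(s_0,\frac{\pi}{2}-\delta)$ converges, as $\delta\downarrow 0$ and $C$ approaches $\pm\frac{h}{n-1}$, to the same $\pi/2$ limit even when $h$ is perturbed simultaneously. This reduces to a uniform estimate of the tail integral in a joint neighborhood of a transition point, which follows from the explicit analytic expansion described above. Once this uniform jump is in hand, Lemma \ref{twcinh}, combined with the symmetric (and easier) continuity statement in $h$ for fixed $C$ away from the transitions, yields joint continuity on the full domain.
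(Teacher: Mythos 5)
Your overall strategy is the same as the paper's, only much more detailed: the paper's proof is a one-line observation that the splitting points $C_h$, $-\frac{h}{n-1}$, $-C_{-h}$ of the domain and the expressions of $\tilde W$ on each piece depend continuously on $h$, so that Lemma \ref{twcinh} upgrades to joint continuity. What your write-up supplies is precisely the uniformity that this one-liner leaves implicit: separate continuity in $C$ (Lemma \ref{twcinh}) and in $h$ does not by itself give joint continuity, and your plan --- joint analytic dependence of the endpoints of $J(C,h)$ and of the integrand on each open stratum (implicit function theorem at the simple zeros of the denominator guaranteed by the Proposition \ref{pp4}-type transversality, with a square-root substitution absorbing the integrable endpoint singularity locally uniformly in the parameters), followed by a jump analysis at $C=\pm\frac{h}{n-1}$ made locally uniform in $h$ because the coefficients in the expansions of Lemma \ref{l1} (e.g.\ $\frac{h}{2}$) vary analytically with $h$ --- is the right way to make that step airtight. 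So the proposal is acceptable, and if anything more careful than the published argument.

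One concrete error, though it concerns a point the corollary does not need: you assert that at $C=\pm C_{\pm h}$ the interval $J(C,h)$ degenerates to a point and $\tilde W\to 0$, ``matching the static type \eqref{n00}.'' This contradicts Lemma \ref{ppwidth}: as $C\uparrow C_h$ the oscillation amplitude shrinks to zero, but the width of a period tends to the positive limit $2\pi\big[1+(n-2)\cot^2 s_h\big]^{-\frac{1}{2}}$ (the period of the linearized oscillation about the static torus), not to $0$. Since the domain of $\tilde W(h,\cdot)$ in Definition \ref{tW} is the open interval $(-C_{-h},\,C_h)$, these boundary values play no role in the continuity statement, so this does not invalidate your proof; but the sentence should be corrected or removed.
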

                                                \begin{proof}
                                                Since the splitting of domain and expressions over each piece are continuous in $h$, the statement follows by virtue of Lemma \ref{twcinh}.
                                                \end{proof}
                           
                           \begin{thm}\label{closedflower}
                           There are infinitely many $-h$ with $h>0$ for each of which the flower type Delaunay hypersurface can induce a $(-h)$-CMC immersion from $S^1\times S^{n-2}$ into $\mathbb S^n$.
                           \end{thm}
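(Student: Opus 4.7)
The plan is to reduce Theorem \ref{closedflower} to an intermediate value argument for the width function associated to the flower type. By Lemma \ref{5l}, the extension of the flower-type $(-h)$-curve closes up in $\mathbb S^3$ --- and hence the construction \eqref{n1} descends to a $(-h)$-CMC immersion from $S^1\times S^{n-2}$ into $\mathbb S^n$ --- exactly when
$$F(h):=W\!\left(-h,\tfrac{h}{n-1}\right)=2\int_{z_L(h)}^{\pi/2}\dot s_1(s;h)\,ds \;\in\; \pi\mathbb Q,$$
where $z_L(h)$ is the unique solution in $(0,\pi/2)$ of $\cos s\sin^{n-2}s=\tfrac{h}{n-1}(1-\sin^{n-1}s)$. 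Continuity of $F$ on $(0,\infty)$ is immediate from the preceding Corollary on continuity of $\tilde W$ together with the relation $F(h)=\pi-\tilde W(h,-\tfrac{h}{n-1})$ supplied by Definition \ref{tW}. Consequently, it is enough to verify that $F((0,\infty))$ contains a non-degenerate interval, for then the intermediate value theorem forces $F$ to hit each of the infinitely many elements of $\pi\mathbb Q$ lying in that interval.

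To establish this, I would analyse the two ends of $(0,\infty)$ separately. For large $h$, Taylor expansion of $L(s)$ and $R(s)$ at $u=\pi/2-s=0$ gives $L\sim u$ and $R\sim hu^2/2$, so $z_L(h)=\pi/2-2/h+o(1/h)$, and the substitution $v=hu/2\in[0,1]$ in \eqref{n40} converts the measure $\dot s_1\,ds$ to $-dv/\sqrt{1-v^2}$ uniformly on compacta, whence
$$\lim_{h\to\infty}F(h)=2\int_0^1\frac{dv}{\sqrt{1-v^2}}=\pi.$$
For $h\downarrow 0$, the graph of $R$ shrinks uniformly to the $s$-axis and $z_L(h)\to 0^+$; splitting the integration interval as $(z_L,\varepsilon)\cup(\varepsilon,\pi/2-\delta)\cup(\pi/2-\delta,\pi/2)$, the middle and right layers are each $O(h)$ because there $R\sim h/(n-1)$ while $L$ stays bounded away from zero, and on the left layer the rescaling $s=(h/(n-1))^{1/(n-2)}t$ reduces the integral to $(h/(n-1))^{1/(n-2)}\int_1^{T(h)}dt/\sqrt{t^{2(n-2)}-1}$, of order $h^{1/(n-2)}$ for $n\geq 4$ and of order $h\log(1/h)$ for $n=3$. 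In every case $F(h)\to 0$ as $h\downarrow 0$, so by continuity the image of $F$ contains $(0,\pi)$, hence all of $\pi\mathbb Q\cap(0,\pi)$, yielding infinitely many required $h$; each of them produces a closed analytic flower-type $(-h)$-curve which, through \eqref{n1} over $\mathbb S^{n-2}$, descends to a $(-h)$-CMC immersion of $S^1\times S^{n-2}$ into $\mathbb S^n$.

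The main obstacle will be the small-$h$ asymptotic $F(h)\to 0$: one must simultaneously track the moving left endpoint $z_L(h)\to 0^+$, the integrable square-root singularity of $\dot s_1$ at $z_L$, and patch smoothly to the finite behaviour of $\dot s_1$ near $\pi/2$ guaranteed by Lemma \ref{l1}. The dimension $n=3$ is the tightest, since the boundary-layer estimate carries an extra logarithmic factor which only barely succumbs to the prefactor $h$ and must be handled explicitly. The large-$h$ limit is by contrast clean, because once the scaling $v=hu/2$ is in force the integrand converges uniformly to the arc-length element of a semicircle.
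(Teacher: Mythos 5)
Your proposal is correct and follows essentially the paper's own route: reduce closure of the flower-type $(-h)$-curve to $W(-h,\tfrac{h}{n-1})\in\pi\mathbb{Q}$ via Lemma \ref{5l}, then use continuity of $\tilde W$ and the intermediate value theorem in $h$ to hit infinitely many rational multiples of $\pi$. The paper pins down the needed range more cheaply---$\tilde W(0,0)=\pi$ because the integrand in \eqref{Wdef} vanishes when $C=h=0$, together with $W(-h,\tfrac{h}{n-1})>0$ for $h>0$---so your small-$h$ boundary-layer estimates and the large-$h$ limit $F(h)\to\pi$ are not required, although they do give the slightly stronger conclusion that every width in $(0,\pi)$ is attained.
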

                           \begin{proof}
                           Note that the expression \eqref{Wdef} contributes nothing for $C=h=0$.
                           So $\tilde W(0,0)=\pi$.
                          But, for $C=\frac{h}{n-1}>0$,  it can be seen that $W(-h, \frac{h}{n-1})>0$
                          and consequently $\tilde W(h, -\frac{h}{n-1})\neq \pi$.
                          Hence,
                          by Lemma \ref{5l} and Definition \ref{tw},
                           every element in $\pi \mathbb Q\bigcap \big(\tilde W(h, -\frac{h}{n-1}),\, \pi\big)$ can decide an immersion of $S^1\times S^{n-2}$ into $\mathbb S^n$ of corresponding constant mean curvature.
                           \end{proof}
                                     
                                     \begin{thm}\label{closedunduloid}
                                     There exists a constant $c>0$ such that for every $h\in (0, c)$
                                     there are countably many 
                                     $(-h)$-CMC immersions from $S^1\times S^{n-2}$ into $\mathbb S^n$ induced by  the unduloid type Delaunay hypersurfaces (in the second last picture of Figure \ref{fig:13}).
                                     \end{thm}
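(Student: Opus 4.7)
\emph{The plan.} By Lemma \ref{5l}, an unduloid $(-h)$-CMC curve with parameter $C\in(\tfrac{h}{n-1},\,C_{-h})$ (cf.\ Proposition \ref{ppp2}) descends to a closed immersion $S^1\times S^{n-2}\hookrightarrow\mathbb S^n$ if and only if $W(-h,C)\in\pi\mathbb Q$. I will show that for all sufficiently small $h>0$ the continuous function $C\mapsto W(-h,C)$ has an image containing an open interval close to $(\pi,\,\pi\sqrt 2\,)$; this interval meets $\pi\mathbb Q$ in countably many points, each giving, via Lemma \ref{5l}, a distinct unduloid-type immersion as required.

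\emph{Step 1 (continuity and flower-side limit).} The integrand of \eqref{n40} is real-analytic in $(s,h,C)$ away from the endpoints of $J(C,-h)$, where it exhibits the standard integrable inverse-square-root singularity by Proposition \ref{pp4}. Hence $W(-h,\cdot)$ is continuous (indeed analytic, cf.\ Remark \ref{analytic}) on $(\tfrac{h}{n-1},\,C_{-h})$. Using Definition \ref{tW} together with joint continuity of $\tilde W$ (Lemma \ref{twcinh} and its corollary), I read off
$$\lim_{C\downarrow \frac{h}{n-1}} W(-h,C)\;=\;\pi+W\!\left(-h,\tfrac{h}{n-1}\right),$$
which, because $\tilde W(0,0)=\pi$ (proof of Theorem \ref{closedflower}) and $\tilde W$ is jointly continuous, tends to $\pi$ as $h\to 0^+$.

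\emph{Step 2 (static-side limit via linearization).} Set $L(s)=\cos s\sin^{n-2}s$ and $\kappa(s)=\sec^2 s+(n-2)\csc^2 s$. At the tangent point $s_{-h}$ one has $L'(s_{-h})=-h\,L(s_{-h})$, matching the derivative of $R(s,C_{-h},-h)=C_{-h}-\tfrac{h}{n-1}\sin^{n-1}s$, so the first-order cross term in $s-s_{-h}$ cancels and for $C$ slightly below $C_{-h}$,
$$\cos^2s\,\sin^{2(n-2)}s-\left(C-\tfrac{h}{n-1}\sin^{n-1}s\right)^2 \;\approx\; 2L(s_{-h})(C_{-h}-C)-L(s_{-h})^2\kappa(s_{-h})(s-s_{-h})^2.$$
Substituting this quadratic expression into \eqref{n40} and integrating across the shrinking interval $J(C,-h)$ reduces $W$ to an arcsine integral, yielding
$$W_s(h)\;:=\;\lim_{C\uparrow C_{-h}} W(-h,C)\;=\;\frac{2\pi}{\sqrt{1+(n-2)\cot^2 s_{-h}}}\,.$$
At $h=0$, $s_{-h}=\arctan\sqrt{n-2}$, whence $W_s(0)=\pi\sqrt 2$.

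\emph{Conclusion and main obstacle.} Choose any $q\in\pi\mathbb Q\cap(\pi,\pi\sqrt 2\,)$, e.g.\ $q=5\pi/4$. By Steps 1 and 2 together with joint continuity in $h$, there exists $c>0$ so that for every $h\in(0,c)$,
$$\lim_{C\downarrow\frac{h}{n-1}} W(-h,C)\;<\;q\;<\;W_s(h).$$
The intermediate value theorem then gives an open interval $I_h$ in the image of $W(-h,\cdot)$ containing $q$; since $I_h\cap\pi\mathbb Q$ is countably infinite, Lemma \ref{5l} produces the asserted countable family of distinct closed unduloid-type $(-h)$-CMC immersions of $S^1\times S^{n-2}$ into $\mathbb S^n$. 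The technical heart is Step 2: the second-order cancellation at the tangency must be treated carefully to verify that \eqref{n40} really collapses to an arcsine integral and that the resulting $W_s(h)$ is both finite and continuous in $h$, so that the comparison $W_s(h)>\pi$ persists on a positive interval of $h$'s.
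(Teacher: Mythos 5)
Your proposal is correct and follows essentially the same route as the paper: compare the flower-side width (which tends to $\pi$ as $h\to 0$, via $\tilde W(0,0)=\pi$ and continuity of the adjusted width) with the static-side width (equal to $\sqrt2\,\pi$ at $h=0$, i.e.\ the paper's value $\tilde W(0,-C_0)=(2-\sqrt2)\pi$ after adjustment), then conclude by continuity in $h$, the intermediate value theorem, and Lemma \ref{5l}. Your Step 2 simply carries out explicitly the arcsine evaluation that the paper delegates to the argument of Lemma \ref{ppwidth} (or Lemma 10.1 of \cite{L-Z}).
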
      
                                     \begin{proof}    
                                     Similar to the proof of Theorem \ref{closedflower}.
                                     When $C=h=0$, we have $\tilde W(0,0)=\pi$.
                                     For $C=-C_{0}$, by Lemma 10.1 of \cite{L-Z} or the argument (valid for $h=0$) of Lemma \ref{ppwidth} in below
                                      it can be computed that $\tilde W(0,-C_0)=(2-\sqrt 2)\pi$.
                                      Hence, by continuity there exists some open interval $(0, c)$ 
                                      such that $\tilde W(h,0)> \tilde W(h,-C_h)$ for every $h\in(0,c)$.
                                      So, for each such $h$, there exists some $C\in(-C_h, 0)$ for $\tilde W(h,C)\in \pi\mathbb Q$.
                                      By Lemma \ref{5l} and Definition \ref{tw},
                                       $\tilde W(h,C)\in \pi\mathbb Q$ implies
                                       the corresponding Delaunay hypersurfaces decides $(-h)$-CMC immersions from $S^1\times S^{n-2}$ into $\mathbb S^n$.
                                     \end{proof}  
                                     \begin{cor}
                                     The same statement of Theorem \ref{closedunduloid} holds for   the noduloid type Delaunay hypersurfaces.
                                     \end{cor}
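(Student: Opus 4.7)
The plan is to mirror the argument of Theorem \ref{closedunduloid}, but on the noduloid slice of the deformation. In the adjusted parametrization of Definition \ref{tW}, the noduloid type occupies the window $C\in(-\tfrac{h}{n-1},0)$, bounded by the sphere ($C=0$) on one side and the flower ($C=-\tfrac{h}{n-1}$) on the other. By Lemma \ref{twcinh}, $\tilde W(h,\cdot)$ is continuous on the closed interval $[-\tfrac{h}{n-1},0]$, so once I show $\tilde W(h,-\tfrac{h}{n-1})\neq\tilde W(h,0)$ for all $h$ in some open interval $(0,c)$, the image will be a nondegenerate interval and therefore will contain countably many rational multiples of $\pi$. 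By Lemma \ref{5l} and Definition \ref{tW}, each such value corresponds to a closed noduloid $(-h)$-curve and thus to a $(-h)$-CMC immersion of $S^1\times S^{n-2}$ into $\mathbb S^n$.

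First I would evaluate the sphere endpoint exactly. Substituting $C=0$ into \eqref{n40} and using $u=\tfrac{h}{n-1}\tan s$ reduces the defining integral to an elementary arcsine, yielding
$$
\tilde W(h,0)\,=\,W(h,0)\,=\,2\arctan\tfrac{n-1}{h}\,=\,\pi-2\arctan\tfrac{h}{n-1}\, ,
$$
so $\tilde W(h,0)$ approaches $\pi$ from below with linear defect $\sim 2h/(n-1)$ as $h\downarrow 0$. On the other hand, $\tilde W(h,-\tfrac{h}{n-1})=\pi-W(-h,\tfrac{h}{n-1})$ by Definition \ref{tW}, so the two endpoints disagree precisely when $W(-h,\tfrac{h}{n-1})\neq 2\arctan\tfrac{h}{n-1}$.

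The hard part is an asymptotic estimate of the flower width $W(-h,\tfrac{h}{n-1})$ as $h\downarrow 0$. Its integration domain is $[\hat s_h,\tfrac{\pi}{2})$ with $\hat s_h$ determined by $\cos^2 s\sin^{2(n-2)}s=\tfrac{h^2}{(n-1)^2}(1-\sin^{n-1}s)^2$; linearizing near $s=0$ gives $\hat s_h\sim(\tfrac{h}{n-1})^{1/(n-2)}$. Splitting the integral into a neighborhood of $\hat s_h$ (integrable $\epsilon^{-1/2}$ singularity), a bulk piece on which $\dot s_1\sim\tfrac{h}{(n-1) s^{n-2}}$, and a neighborhood of $\tfrac{\pi}{2}$ on which $\dot s_1\to h/2$ by Lemma \ref{l1}, the bulk dominates and produces sublinear decay of order $h\log(1/h)$ when $n=3$ and of order $h^{1/(n-2)}$ when $n\geq 4$. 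In every case this is strictly slower than the linear decay of $2\arctan\tfrac{h}{n-1}$, so $W(-h,\tfrac{h}{n-1})>2\arctan\tfrac{h}{n-1}$ (equivalently $\tilde W(h,-\tfrac{h}{n-1})<\tilde W(h,0)$) on some interval $(0,c)$. The continuity-plus-density argument of the first paragraph then completes the proof; the delicacy lies in the $\hat s_h$-analysis, where the $n$-dependent vanishing order of $\cos^2 s\sin^{2(n-2)}s$ at $s=0$ governs the decay rate.
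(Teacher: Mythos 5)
Your argument is correct in outline, but it takes a genuinely different route from the paper. The paper's proof is soft: it reuses the endpoint computation from Theorem \ref{closedunduloid} (for small $h$ the adjusted width takes values near $\pi$ at the flower end and near $(2-\sqrt 2)\pi$ at the static end, so $\tilde W(h,\cdot)$ is non-constant on $(-C_{-h},0)$) and then invokes the analyticity of Remark \ref{analytic}: since $\tilde W(h,\cdot)$ is real-analytic on the part of the deformation avoiding the genuine singular set $\{s=0\}$, non-constancy on the $(-h)$-unduloid window forces non-constancy on the noduloid window $(-\tfrac{h}{n-1},0)$ as well, so its image is a nondegenerate interval, and density of $\pi\mathbb Q$ together with Lemma \ref{5l} finishes. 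You instead work entirely inside the noduloid window and separate its two endpoint values by hand: the sphere end gives $\tilde W(h,0)=2\arctan\tfrac{n-1}{h}=\pi-2\arctan\tfrac{h}{n-1}$ (this is just Lemma \ref{ppwidth0}), while at the flower end $\tilde W(h,-\tfrac{h}{n-1})=\pi-W(-h,\tfrac{h}{n-1})$, and you estimate the flower width as $h\downarrow 0$ (domain $(\hat s_h,\tfrac{\pi}{2})$ with $\hat s_h\sim(\tfrac{h}{n-1})^{1/(n-2)}$, bulk contribution of order $h\log(1/h)$ for $n=3$ and $h^{1/(n-2)}$ for $n\geq 4$), which beats the linear defect $2\arctan\tfrac{h}{n-1}$, so the endpoints differ on some $(0,c)$ and the intermediate value theorem plus Lemma \ref{twcinh}, Lemma \ref{5l} and Definition \ref{tW} give countably many closed noduloids. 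The trade-off: your proof needs only continuity of $\tilde W$ (not the analyticity claim of Remark \ref{analytic}) and is quantitative, even identifying an explicit subinterval of attained widths; the paper's proof needs no new estimate at all but leans on analyticity of the width in $C$. One point to tighten in your write-up: for the comparison only a \emph{lower} bound on $W(-h,\tfrac{h}{n-1})$ is required, and it follows cleanly from $\dot s_1\geq R/L$ on the bulk region (with $R=\tfrac{h}{n-1}(1-\sin^{n-1}s)$ and $L=\cos s\sin^{n-2}s$), so you should phrase the asymptotics as an explicit lower bound rather than an order-of-magnitude claim; also note (as the paper does after Proposition \ref{pp8}) that the closed noduloid carries both unit normals, so it indeed yields the asserted $(-h)$-CMC immersion of $S^1\times S^{n-2}$.
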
   
                                     \begin{proof}
                                    This follows by the argument in the proof of Theorem \ref{closedunduloid} and the analyticity  explained in Remark \ref{analytic}.
                                     \end{proof}                      
          
          Apparently, one cannot  expect embedded CMC hypersurface of flower type or noduloid type.
          Next, we focus on the second picture in Figure \ref{fig:13} and search for embedded CMC Delaunay hypersurfaces.

                                                                                                    \begin{lem}\label{ppwidth0}
                                                           Given $h>0$, $\lim_{C\downarrow 0}W(h, C)
                                                           =2\arctan\frac{n-1}{h}$.
     \end{lem}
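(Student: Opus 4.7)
The plan is to pass the limit inside the integral defining $W$ and then evaluate the resulting $C{=}0$ integral in closed form. Write $\lambda:=h/(n-1)$. At $C{=}0$ the common factor $\sin^{n-2}s$ cancels from numerator and denominator of \eqref{n40}, giving
\[
\dot s_1\big|_{C=0}=\frac{\lambda\sin s}{\cos s\sqrt{\cos^{2}s-\lambda^{2}\sin^{2}s}}\, ,
\]
which is defined on $J(0,h)=(0,\arctan(1/\lambda))=(0,\arctan\frac{n-1}{h})$, vanishes linearly at $s=0$, and has an integrable $(z_R^h(0)-s)^{-1/2}$ singularity at the right endpoint (ensured by the $C=0$ version of Proposition \ref{pp4}, as $\dot L-\dot R\neq 0$ at $z_R^h(0)$ for $h>0$). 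Hence $W(h,0):=2\int_{0}^{\arctan(1/\lambda)}\dot s_1|_{C=0}\,ds$ is a well-defined finite quantity.

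To evaluate it, substitute $u=\cos s$:
\[
W(h,0)=2\int_{\lambda/\sqrt{1+\lambda^{2}}}^{1}\frac{\lambda\,du}{u\sqrt{(1+\lambda^{2})u^{2}-\lambda^{2}}}\, .
\]
Then $u=\frac{\lambda}{\sqrt{1+\lambda^{2}}}\sec\theta$ collapses the integrand to $d\theta$, with the new limits running from $0$ to $\arccos(\lambda/\sqrt{1+\lambda^{2}})=\arctan(1/\lambda)$. This gives $W(h,0)=2\arctan(1/\lambda)=2\arctan\frac{n-1}{h}$, the asserted value.

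It remains to justify $\lim_{C\downarrow 0}W(h,C)=W(h,0)$. On any compact subinterval of $(0,\arctan(1/\lambda))$ the integrand of \eqref{n40} converges uniformly to its $C=0$ value as $C\downarrow 0$. Near the right endpoint, continuity of $\dot D$ together with $\dot D(z_R^h(0),0)\neq 0$ (Proposition \ref{pp4} at the limit) yields a $C$-uniform $O(\sqrt\delta)$ bound on $\int_{z_R^h(C)-\delta}^{z_R^h(C)}\dot s_1\,ds$. The delicate region is $[z_L^h(C),\delta]$ near $s=0$: for $C>0$ the integrand has an integrable singularity at $z_L^h(C)\sim C^{1/(n-2)}$, whereas for $C=0$ it vanishes like $\sin s$. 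Exploiting the leading Taylor expansion $\cos s\sin^{n-2}s\simeq s^{n-2}$, $\cos s\simeq 1$ near $s=0$, the substitution $s^{n-2}=(C+\lambda s^{n-1})\sec\theta$ converts $\int_{z_L^h(C)}^{\delta}\dot s_1\,ds$ into a $\theta$-integral bounded uniformly in $C$, whose value tends to $\int_{0}^{\delta}\dot s_1|_{C=0}\,ds$ as $C\downarrow 0$ (and the latter tends to $0$ as $\delta\downarrow 0$). Combining the three estimates gives $W(h,C)\to W(h,0)$.

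The main obstacle is precisely the reconciliation at the left endpoint: a collapsing integrable singularity of the $C>0$ integrand must be matched against the merely vanishing $C=0$ integrand, so neither uniform convergence on the full interval nor direct dominated convergence applies. The $\delta$-splitting combined with the local $\sec\theta$-substitution described above is what closes the gap.
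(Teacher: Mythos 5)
Your proposal is correct, but it goes by a genuinely different route than the paper. The paper's proof is essentially two lines of geometry: by Proposition \ref{pp6} the $C=0$ curve (with its single reflection) is the generating circle of a round hypersphere, so $W(h,0)=2z_R^h$, and since that hypersphere has mean curvature $(n-1)\cot z_R^h=h$ one reads off $z_R^h=\arctan\frac{n-1}{h}$; the identification of $\lim_{C\downarrow 0}W(h,C)$ with $W(h,0)$ is left implicit. You instead evaluate the $C=0$ integral in closed form -- your cancellation of $\sin^{n-2}s$, the substitution $u=\cos s$ and then $u=\frac{\lambda}{\sqrt{1+\lambda^2}}\sec\theta$ are all correct and independently confirm $W(h,0)=2z_R^h=2\arctan\frac{n-1}{h}$ -- and you then explicitly justify the interchange of limit and integral by the three-region splitting. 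That last part is the one place where you only sketch: the claim that the substitution $s^{n-2}=(C+\lambda s^{n-1})\sec\theta$ gives a $C$-uniform bound near $s=0$ is not carried out, though the underlying estimate is true (a model computation with $s=C^{1/(n-2)}t$ gives a contribution $O(C^{1/(n-2)})$ for $n\ge 4$ and $O(C\log(1/C))$ for $n=3$, uniformly small), and your use of the nonvanishing of $\dot L-\dot R$ at $z_R^h$ (which equals $-\frac{n-1}{h}-\frac{h}{n-1}$ times $\cos s\sin^{n-2}s$ there, indeed nonzero) correctly controls the right endpoint. In short: the paper's argument is shorter and conceptual, leaning on Proposition \ref{pp6}; yours is self-contained, purely analytic, and actually addresses the continuity at $C=0$ that the paper glosses over.
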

                                \begin{proof}
                                According to Proposition \ref{pp6}, 
                                $W(h, 0)=2 z^h_R$.
                                As the mean curvature the hypersphere in $\mathbb S^n$ is given by $(n-1)\cdot\frac{\cos z^h_R}{\sin z^h_R}$,
                                one has 
                                $z^h_R=\arctan\frac{n-1}{h}$.
                                                           \end{proof}

                                                           \begin{lem}\label{ppwidth}
                                                           Given $h>0$, $\lim_{C\uparrow C_h}W(h, C)
                                                           =
                                                           {2\pi}
                                                    {{
                                                    \big[
                                                   1+(n-2){\cot^2 s_h}
                                                   \big]^{-\frac{1}{2}}}
                                                   }$.
                                                           \end{lem}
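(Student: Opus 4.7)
The plan is to extract $\lim_{C\uparrow C_h}W(h,C)$ from a local asymptotic analysis of the integrand in \eqref{n40} near the contact point $s_h$. As $C\uparrow C_h$, the two roots $z^h_L<z^h_R$ of the quantity under the radical both collapse to $s_h$, so the integration range $J(C,h)$ shrinks while $\dot s_1$ diverges at its endpoints. The strategy is to freeze the slowly varying factors in \eqref{n40} at $s=s_h$ and to recover the leading behavior from a quadratic expansion of $L^2-R^2$ about $s_h$, where $L(s):=\cos s\sin^{n-2}s$ and $R(s,C,h):=C+\tfrac{h}{n-1}\sin^{n-1}s$.

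Set $F(s,C):=L(s)-R(s,C,h)$. Since $\partial_C F\equiv -1$, one has $F(s,C)=F(s,C_h)+(C_h-C)$. The defining tangency conditions $F(s_h,C_h)=0$ and $\dot F(s_h,C_h)=\dot L(s_h)-hL(s_h)=0$ force $F(\cdot,C_h)$ to have a double zero at $s_h$, so that
$$
F(s,C_h)=-\tfrac{1}{2}\mu(s-s_h)^2+O\bigl((s-s_h)^3\bigr),\qquad \mu:=\ddot R(s_h,C_h,h)-\ddot L(s_h).
$$
Writing $\dot L=kL$ with $k(s):=-\tan s+(n-2)\cot s$ and using $\ddot R=h\dot L$, the identity $k(s_h)=h$ gives $\ddot R(s_h)=h^2L(s_h)$ and $\ddot L(s_h)=(\dot k(s_h)+h^2)L(s_h)$, so that
$$
\mu=-\dot k(s_h)\,L(s_h)=\bigl[\sec^2 s_h+(n-2)\csc^2 s_h\bigr]L(s_h)>0.
$$

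With $\epsilon:=C_h-C$ and $u:=s-s_h$, the condition $F(s,C)>0$ that defines $J(C,h)$ becomes $|u|<\sqrt{2\epsilon/\mu}\,(1+o(1))$. On this shrinking interval,
$$
L^2-R^2=F\cdot(L+R)=\bigl[\epsilon-\tfrac{\mu}{2}u^2+o(\epsilon)\bigr]\bigl[2L(s_h)+o(1)\bigr],
$$
while the numerator of \eqref{n40} tends to $R(s_h,C_h,h)=L(s_h)$ and the factor $\cos s$ to $\cos s_h$. The substitution $u=\sqrt{2\epsilon/\mu}\,\sin\theta$ reduces the dominant part of $\int_{J}\dot s_1\,ds$ to an elementary integral, producing
$$
\lim_{C\uparrow C_h}W(h,C)=2\cdot\frac{\sqrt{L(s_h)/2}}{\cos s_h}\cdot\pi\sqrt{\tfrac{2}{\mu}}=\frac{2\pi\sqrt{L(s_h)}}{\cos s_h\,\sqrt{\mu}}=\frac{2\pi}{\sqrt{1+(n-2)\cot^2 s_h}},
$$
where the last equality uses the cancellation of $L(s_h)$ against the identical factor in $\mu$ together with $\cos^2 s_h\,\sec^2 s_h=1$ and $\cos^2 s_h\,\csc^2 s_h=\cot^2 s_h$.

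The one technical point to address is that the $o(\cdot)$ errors in the expansions of $F$, $L+R$, the numerator, and $\cos s$ really contribute only negligible corrections to the rescaled integral. This is handled by a standard $\epsilon$-rescaling: putting $u=\sqrt{\epsilon/\mu}\,\tilde u$ renders the $\tilde u$-interval uniformly bounded, after which each error term carries a positive power of $\sqrt{\epsilon}$ and dominated convergence on the compact $\tilde u$-range delivers the stated limit.
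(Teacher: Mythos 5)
Your proposal is correct and follows essentially the same route as the paper: a second-order Taylor expansion of the quantity under the radical at the tangency point $s_h$ (using $\dot L(s_h)=\dot R(s_h)$ and the second derivatives of $L$ and $R$), after which the limit reduces to the arcsine-type integral $\int du/\sqrt{A_C-\mu' u^2}=\pi/\sqrt{\mu'}$, independent of the small parameter, yielding $2\pi\big[1+(n-2)\cot^2 s_h\big]^{-1/2}$. Your factorization $L^2-R^2=F\cdot(L+R)$ with $\partial_C F\equiv-1$ and the explicit rescaling/dominated-convergence control of the error terms are just a slightly more detailed bookkeeping of the same computation the paper sketches.
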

                                                            \begin{proof}
                                                           Recall that
                                                            $L$ and $R$ share
                                                            the same slope 
                                                            at $s=s_h$.
                                                            So let us  consider their second derivatives
$$\ddot L=\left\{
                                   \Big[
                                             -\tan s+(n-2)\cot s
                                                   \Big]^2
                                                   -
                                                   \left[
                                                   \frac{1}{\cos^2 s}+\frac{n-2}{\sin^2 s}
                                                   \right]
                                                   \right\}
                                                   \cos s \sin^{n-2} s
                                                   $$
                                                   and 
                                                $$\ddot R=h
                                   \big[
                                             -\tan s+(n-2)\cot s
                                                   \big]
                                       \cos s \sin^{n-2} s.$$
                                        We can observe
                                        that, when $C\uparrow C_h$,
                                                     the part under square in \eqref{n40}
                                                     becomes
                                                     $$
                                                    \text{ small positive number } A_C
                                                    -
                                                      \left[
                                                   \frac{1}{\cos^2 s}+\frac{n-2}{\sin^2 s}
                                                   \right] 
                                                   \left(
                                                   \cos s \sin^{n-2} s
                                                   \right)^2
                                                   \cdot (\delta s)^2
                                                   +o(\delta s)^2
                                                     $$
                                                     where $\delta s$ is quite small and $A_C\rightarrow 0$ as $C\uparrow C_h$.
                                                     As 
                                                     $
                                                     C_h+\frac{h}{n-1}\sin^{n-1}s_h= \cos s_h \sin^{n-2} s_h$,
                                                     it follows that
                                         %
                                              \begin{eqnarray}
                                             && \lim_{C\uparrow C_h}W(h, C)
                                            =
                                                   2 \lim_{C\uparrow C_h} \int_{J(C,h)} \dot s_1 ds
                                                    \nonumber\\
                                                                                                        &=&
                                                                                                       2 \lim_{C\uparrow C_h} \mathlarger\int_{J(C,h)}
                                                                                                        \, \dfrac{\left(C+\frac{h}{n-1}\sin^{n-1}s\right)ds}
                                                                                                        {\cos s\sqrt{A_C - 
                                                    \big[
                                                   \frac{1}{\cos^2 s}+\frac{n-2}{\sin^2 s}
                                                   \big] \left(\cos s \sin^{n-2} s\right)^2
                                                   \cdot (s-s_h)^2
                                                   }}
 \nonumber\\
                             &=&
                                 \dfrac{2\pi\cos s_h\sin^{n-2}s_h}
                                                    {\cos s_h\sqrt{
                                                    \big[
                                                   1+(n-2){\cot^2 s_h}
                                                   \big]} \sin^{n-2} s_h
                                                   }
 %
 =  
 \dfrac{2\pi}
                                                    {\sqrt{
                                                    \big[
                                                   1+(n-2){\cot^2 s_h}
                                                   \big]}
                                                   }
\nonumber \,.
                                              \end{eqnarray}
Here we apply the evaluation of $\arcsin$ twice and the integral turns out to be independent on value of $A_C$.
                                                           \end{proof}
                                    
                                    \begin{thm}
                                    For any $n\geq 3$, 
                                    there are uncountably many positive number $h$
                                    for each of which there exists an embedded $h$-CMC Delaunay hypersurface in $\mathbb S^n$.
                                    \end{thm}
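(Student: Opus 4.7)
The plan is to find an open interval of positive $h$ for each of which the equation $W(h,C)=\pi$ admits some $C\in (0,C_h)$. The resulting unduloid-type curve (second picture in Figure \ref{fig:13}) will close into a simple closed curve in $\mathbb S^2$ after exactly two periods, and thus generate an embedded $h$-CMC Delaunay hypersurface in $\mathbb S^n$. Since an open interval is uncountable, this will prove the theorem.

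Set $f(h):=2\arctan\frac{n-1}{h}$ and $g(h):=2\pi/\sqrt{1+(n-2)\cot^2 s_h}$. Lemmas \ref{ppwidth0} and \ref{ppwidth} identify these with $\lim_{C\downarrow 0}W(h,C)$ and $\lim_{C\uparrow C_h}W(h,C)$ respectively. Both are continuous on $(0,\infty)$, with $f(0^+)=\pi$, $g(0^+)=\pi\sqrt 2$, and $f(h),g(h)\to 0$ as $h\to\infty$. By the intermediate value theorem, the range of the continuous map $W(h,\cdot)\colon(0,C_h)\to\R$ contains the open interval with endpoints $f(h)$ and $g(h)$.

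I claim $\pi\in(f(h),g(h))$ on a nontrivial open subinterval of $(0,\infty)$. Indeed $f(h)<\pi$ for every $h>0$; and $g(h)>\pi$ is equivalent to $(n-2)\cot^2 s_h<3$, which at $h=0$ reduces to $(n-2)\cot^2 s_0=1<3$, so by continuity $g(h)>\pi$ on some interval $(0,h^*)$ with, as a short calculation shows, $h^*=2\sqrt{(n-2)/3}$ (only the existence matters). Thus for every $h\in(0,h^*)$ the IVT produces some $C(h)\in(0,C_h)$ with $W(h,C(h))=\pi$.

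It remains to verify embeddedness. Because $C(h)>0$ and $h>0$, formula \eqref{n40} yields $\dot s_1>0$ throughout, so the extended $h$-curve $\gamma$ has strictly increasing $s_1$; the condition $W=\pi$ forces two consecutive oscillations to advance $s_1$ by exactly $2\pi$, so the two periods occupy the disjoint sub-intervals $[0,\pi)$ and $[\pi,2\pi)$ of $s_1$ and $\gamma$ is a simple closed curve in $\mathbb S^2$. Since $b(t)=\sin s(t)>0$ along $\gamma$, an elementary check of $G_\gamma(t,x)=(a(t)e^{is_1(t)},b(t)x)$ shows that the $\mathbb S^{n-2}$-rotation of a simple closed curve in $\mathbb S^2$ is an embedding of $S^1\times S^{n-2}$ into $\mathbb S^n$. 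This yields an embedded $h$-CMC Delaunay hypersurface for every $h\in(0,h^*)$, an uncountable family. The main obstacle is locating some range of $h$ where $\pi$ lies strictly inside the open interval between the two boundary widths; this is resolved by the direct comparison $f(0^+)=\pi<\pi\sqrt 2=g(0^+)$ together with continuity.
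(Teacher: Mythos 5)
Your proposal is correct and follows essentially the same route as the paper: compare the two boundary widths $\lim_{C\downarrow 0}W(h,C)=2\arctan\frac{n-1}{h}<\pi$ and $\lim_{C\uparrow C_h}W(h,C)=2\pi\big/\sqrt{1+(n-2)\cot^2 s_h}>\pi$ for $h<\sqrt{4(n-2)/3}$ (your $h^*=2\sqrt{(n-2)/3}$ is the same threshold), apply the intermediate value theorem in $C$ to solve $W(h,C)=\pi$, and conclude that the curve closes up embeddedly after two periods. Your extra verification of embeddedness via the strict monotonicity of $s_1$ from \eqref{n40} is a welcome elaboration of the step the paper only asserts.
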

                                    \begin{proof}
                                    By Lemma \ref{ppwidth0}, 
                                    $\lim_{C\downarrow 0}W(h, C)<\pi$
                                    for $h>0$.
                                    Whereas, by Lemma \ref{ppwidth},
                                          $$\lim_{C\uparrow C_h}W(h, C)
                                          =
                                          \frac{2\pi}
                                          %
                                                    {\sqrt{
                                                   1+(n-2){\cot^2 s_h}
                                                   }
                                                   }=
                                                   \frac{2\pi}{\sqrt{2+\frac{h^2+h\sqrt{h^2+4(n-2)}}{2(n-2)}}}>\pi\,\,\text{ when }h<\sqrt{\frac{4(n-2)}{3}}.$$
                                                   So, for every $h\in (0,\sqrt{\frac{4(n-2)}{3}})$,
                                                   there exists some $C\in (0, C_h)$
                                                  solving  $W(h, C)=\pi$.
                                                   As a result, the generating curve closes up in two periods as an embedded curve,
                                                   and the induced $h$-CMC Delaunay hypersurface is embedded as well.
                                    \end{proof}
                                    \begin{rem}
                                    For any fixed $h>0$, when $n$ is sufficiently large, there can be guaranteed an embedded $h$-CMC Delaunay hypersurface in   each   $\mathbb S^n$.                               
                                    \end{rem}

                                          Instead of $\pi$, one can consider to fulfill different size of width for an embedded Delaunay hypersurface,
                                          for example  $W(h, C)=\frac{2\pi}{k}$ where $2<k\in \mathbb Z$.
                                          \begin{thm}
                                          For $3\leq n\leq 9$,
                                          one can have infinitely many $2< k\in \mathbb Z$
                                          for each of which there exists a nonempty open interval $Z(k):=\left(\frac{k(n-1)}{\pi}, (k^2-2)\sqrt{\frac{n-2}{k^2-1}}\right)$
                                          such that, for any $h\in Z(k)$, 
                                          the required relation $W(h, C)=\frac{2\pi}{k}$ to generate embedded Delaunay hypersurfaces (consisting of $k$ periods of oscillation)  is solvable.
                                          \end{thm}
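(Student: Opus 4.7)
The plan is to apply the intermediate value theorem to $W(h,\cdot)$ on $(0,C_h)$ and realize the prescribed rational value $\frac{2\pi}{k}$. For fixed $h>0$, the width $W(h,C)$ depends continuously on $C\in(0,C_h)$: the integrand in \eqref{n40} is smooth in $C$, the endpoints of $J(C,h)$ move continuously by Proposition \ref{pp4}, and $\dot s_1$ has only an integrable square-root singularity at those endpoints (again by Proposition \ref{pp4}). Hence $W(h,C)=\frac{2\pi}{k}$ is solvable whenever the endpoint limits supplied by Lemmas \ref{ppwidth0} and \ref{ppwidth} strictly straddle $\frac{2\pi}{k}$.

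I would next translate these two strict inequalities into explicit constraints on $h$. By Lemma \ref{ppwidth0}, the lower-limit condition $2\arctan\frac{n-1}{h}<\frac{2\pi}{k}$ is equivalent to $h>\frac{n-1}{\tan(\pi/k)}$, and since $\tan(\pi/k)>\pi/k$ it suffices to have $h>\frac{k(n-1)}{\pi}$, which is the left endpoint of $Z(k)$. By Lemma \ref{ppwidth}, the upper-limit condition $\frac{2\pi}{k}<\frac{2\pi}{\sqrt{1+(n-2)\cot^2 s_h}}$ is equivalent to $1+(n-2)\cot^2 s_h<k^2$. Plugging in $\cot s_h=\frac{h+\sqrt{h^2+4(n-2)}}{2(n-2)}$ from \eqref{n00} (so that $1+(n-2)\cot^2 s_h=2+\frac{h^2+h\sqrt{h^2+4(n-2)}}{2(n-2)}$, exactly as in the preceding theorem's proof), this becomes $h^2+h\sqrt{h^2+4(n-2)}<2(n-2)(k^2-2)$; isolating the radical and squaring reduces cleanly to $h^2(k^2-1)<(n-2)(k^2-2)^2$, i.e., $h<(k^2-2)\sqrt{\frac{n-2}{k^2-1}}$, the right endpoint of $Z(k)$. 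Once both bounds hold, IVT delivers some $C\in(0,C_h)$ with $W(h,C)=\frac{2\pi}{k}$, so the generating curve closes up after exactly $k$ periods of oscillation and produces an embedded Delaunay hypersurface.

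Finally, I would exhibit infinitely many admissible $k$ by checking that $Z(k)$ is nonempty. Dividing the right endpoint by the left, the ratio tends to $\frac{\pi\sqrt{n-2}}{n-1}$ as $k\to\infty$, which strictly exceeds $1$ precisely when $\pi^2(n-2)>(n-1)^2$. A direct check confirms that this inequality holds for every integer $n$ in $[3,9]$ (e.g., $n=9$ gives $7\pi^2\approx 69.1>64$, whereas $n=10$ fails: $8\pi^2\approx 79.0<81$), so for each admissible $n$ the interval $Z(k)$ is nonempty for all sufficiently large $k$. The main delicate point is the algebraic reduction of the upper-limit inequality, where one must verify that the squaring step preserves direction; this is automatic since the resulting bound implies $h^2\leq(n-2)(k^2-2)^2/(k^2-1)\leq 2(n-2)(k^2-2)$ for any $k\geq 2$, and everything else is either routine continuity or the direct numerical check that singles out the range $3\leq n\leq 9$.
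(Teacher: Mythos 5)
Your proposal is correct and follows essentially the same route as the paper: both arguments pin $h$ between the two endpoint limits of $W(h,\cdot)$ from Lemmas \ref{ppwidth0} and \ref{ppwidth} (using $\arctan x\leq x$, equivalently $\tan(\pi/k)>\pi/k$, for the left endpoint and the algebraic reduction of $2+\frac{h^2+h\sqrt{h^2+4(n-2)}}{2(n-2)}<k^2$ for the right one), check nonemptiness of $Z(k)$ for large $k$ via the leading-order comparison $\frac{n-1}{\pi}<\sqrt{n-2}$, which singles out $3\leq n\leq 9$, and then solve $W(h,C)=\frac{2\pi}{k}$ by continuity in $C$. The only difference is that you spell out the squaring step and the intermediate-value argument in more detail than the paper does, which is a harmless elaboration rather than a different method.
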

                                          \begin{proof}
                                          To solve
                         \begin{equation}\label{ineqk1}
                                          \lim_{C\uparrow C_h}W(h, C)
                                          =
                                          \frac{2\pi}
                                          %
                                                    {\sqrt{
                                                   1+(n-2){\cot^2 s_h}
                                                   }
                                                   }=
                                                   \frac{2\pi}{\sqrt{2+\frac{h^2+h\sqrt{h^2+4(n-2)}}{2(n-2)}}}>\frac{2\pi}{k},
                           \end{equation}
                                              we get $h< (k^2-2)\sqrt{\frac{n-2}{k^2-1}}$.
                                              
                                              On the other hand, $\arctan x\leq x$ for $x\geq 0$.
                                              Therefore, in order for 
                               \begin{equation}\label{ineqk2}
                                            \lim_{C\downarrow 0}W(h, C)
                                                           =2\arctan\frac{n-1}{h}
                                                           <\frac{2\pi}{k},
                                        \end{equation}
                                                      it suffices to require 
                                                      $\frac{n-1}{h}<\frac{\pi}{k}$.
                                                      
                                                      For $Z(k)$ to be nonempty,
                                                      observe that the highest order of its ending points
                                                      are the same and the coefficients  
                                                      are $\frac{n-1}{\pi}$ and $\sqrt{n-2}$ in limit.
                                                      Hence, when $\frac{n-1}{\pi}<\sqrt{n-2}$, i.e., $3\leq n\leq 9$,
                                                      $Z(k)$ is never empty when $k$ is large enough.
                                                      
                                                      Furthermore, by continuity based on \eqref{ineqk1} and \eqref{ineqk2},
                                                      for each $h\in Z(k)$ there exists suitable $C$ depending on   the value of $h$
                                                      such that 
                                                      $W(h, C)=\frac{2\pi}{k}$ is satisfied.
                                                      Correspondingly, an embedded $h$-CMC Delaunay hypersurface is generated in $\mathbb S^n$.
                                          \end{proof}
         \begin{rem}
        By the above argument for $3\leq n\leq 9$, it follows that the union of $\{Z(k)\}_{k> 2}$ will contain an infinite tail $(d_0, \infty)$
with the property that,  for any $h>d_0$ there exist at least $1+\left[\frac{\pi(h-d_0)}{n-1}\right]$ many different embedded $h$-CMC Delaunay hypersurface(s) in $\mathbb S^n$.
        Here the symbol $[\cdot]$ means  taking the largest integer part of the input.
         \end{rem}
   
   To the author, it seems unlikely  that  the unduloid type in the second last picture of Figure \ref{fig:13}  can produce embedded $(-h)$-CMC Delaunay hypersurfaces in $\mathbb S^n$.
   
    {\ }
    
   \section*{Acknowledgement}
     This work is
                       partially supported by NSFC (Grant Nos. 12022109 and 11971352).
                       The author would like to thank Professor Frank Morgan for his interest,
                       and
                       ICTP for warm hospitality 
                       where some initial inspiration of the paper was generated  in December 2023.

     {\ }

\begin{bibdiv}
\begin{biblist}

\bib{H-Y}{article}{
    author={Hsiang, Wu-Yi},
     author={Yu, Wen-Ci},
    title={A generalization of a theorem of Delaunay},
    journal={J. Diff. Geom.}
    volume={16},
    date={1981},
    pages={161--177},
}

\bib{H}{article}{
 author={Hsiang, Wu-Yi},
title={On generalization of theorems of A. D. Alexandrov and C. Delaunay on hypersurfaces of constant mean curvature}
 journal={Duke Math. J.}
  volume={49},
  date={1982},
   pages={485--496},
}

\bib{L-Z}{article}{
    author={Li, Haizhong},
     author={Zhang, Yongsheng},
    title={Spiral Minimal Products}
    journal={arXiv: 2306.03328v3},
}

\bib{M1}{book}{
    author={{Morrey, Jr.}, Charles B.},
    title={Second-order elliptic systems of differential equations},
    place={pp. 101--159 
    in 
    ``Contributions to the theory of partial differential equations", 
    Annals of Mathematics Studies {\bf 33},
    Princeton University Press},
   date={1954},

}

\bib{M2}{article}{
    author={{Morrey, Jr.}, Charles B.},
    title={On the analyticity of the solutions of analytic non-linear
              elliptic systems of partial differential equations. {I}.
              {A}nalyticity in the interior},
    journal={Amer. J. Math.},
    volume={80},
    date={1958},
    pages={198--218},
}

{\ }


{\ }

\end{biblist}
\end{bibdiv}

\end{document}